\newcommand{\ie}{\emph{i.e.}}
\newcommand{\cf}{\emph{cf.}}
\newcommand{\Real}{\mathbb{R}}
\newcommand{\supp}{\mathop{\mathrm{supp}}\nolimits}
\newcommand{\Dom}{\mathsf{D}}
\newcommand{\Hilbert}{\mathcal{H}}
\newcommand{\eps}{\varepsilon}
\newcommand{\sii}{L^2}
\newcommand{\der}{\mathrm{d}}
\newtheorem{Theorem}{Theorem}
\newtheorem{Lemma}{Lemma}
\theoremstyle{definition}
\newtheorem{Remark}{Remark}
\definecolor{DarkGreen}{rgb}{0,0.5,0.1} 
\newcommand\soutD{\bgroup\markoverwith
{\textcolor{DarkGreen}{\rule[.5ex]{2pt}{1pt}}}\ULon}
\newcommand\soutP{\bgroup\markoverwith
{\textcolor{blue}{\rule[.5ex]{2pt}{1pt}}}\ULon}
\newcommand{\Hm}[1]{\leavevmode{\marginpar{\tiny%
$\hbox to 0mm{\hspace*{-0.5mm}$\leftarrow$\hss}%
\vcenter{\vrule depth 0.1mm height 0.1mm width \the\marginparwidth}%
\hbox to
0mm{\hss$\rightarrow$\hspace*{-0.5mm}}$\\\relax\raggedright #1}}}
\begin{document}
%
\title{\textbf{\Large
Ruled strips with asymptotically diverging twisting
}}
\author{
David Krej\v{c}i\v{r}{\'\i}k\footnotemark
\quad and \
Rafael Tiedra de Aldecoa\footnotemark%
}
\date{\small 
\emph{
\begin{quote}
\begin{itemize}
\item[$\dag$] 
Department of Mathematics, Faculty of Nuclear Sciences and 
Physical Engineering, Czech Technical University in Prague, 
Trojanova 13, 12000 Prague 2, Czechia;
david.krejcirik@fjfi.cvut.cz.%
\item[$\ddag$] 
Facultad de Matem\'aticas,
Pontificia Universidad Cat\'olica de Chile,
Av. Vicu\~na Mackenna 4860, Santiago de Chile;
rtiedra@mat.uc.cl.
\end{itemize}
\end{quote}
}
\smallskip
1 February 2018}
\maketitle
%
\footnotetext[2]{Supported by the GACR grant No.\ 18-08835S 
and by FCT (Portugal) through project PTDC/MAT-CAL/4334/2014.} 
\footnotetext[3]{Supported by the Chilean Fondecyt Grant 1170008.}

\begin{abstract}
\noindent
We consider the Dirichlet Laplacian in a two-dimensional strip composed of segments
translated along a straight line with respect to a rotation angle with velocity
diverging at infinity. We show that this model exhibits a ``raise of
dimension'' at infinity leading to an essential spectrum determined by an
asymptotic three-dimensional tube of annular cross-section. If the cross-section of
the asymptotic tube is a disk, we also prove the existence of discrete eigenvalues below
the essential spectrum.
\end{abstract}

\medskip
\noindent
{\small
\begin{tabular}{ll}
\textbf{MSC (2010):} & \rule{-2ex}{0ex}
35P15, 58C40, 58J50, 81Q10, 81Q35.
\\
\textbf{Keywords:} & \rule{-4.5ex}{0ex}
Laplace-Beltrami operator, Dirichlet condition, spectrum, 
infinite strip, ruled surface, twisting.
\end{tabular}
}
%

\section{Introduction} 
%

\subsection{Motivation and context}
The Laplacian arises in many areas of natural sciences as the leading
contribution in the stationary representation of the partial differential equations
determining the time evolution. 
The solution of the time-dependent problem
can thus be inferred from the spectral analysis of the Laplacian, but the latter is also
important on its own for spectral quantities have typically direct physical
interpretations. Moreover, the structure of the underlying manifold leads to an
interesting interaction of analysis and geometry.

An enormous amount of work has been conducted over the last century
to understand spectral-geometric properties of the Laplacian on Riemannian manifolds.
The situation is probably best understood for the two extreme cases of complete
non-compact and compact manifolds, 
while non-complete non-compact problems,
where the Laplacian may possess 
both the essential and discrete spectra,
are typically much less
studied. A prominent exception is provided by \emph{tubes}, \ie\
tubular neighbourhoods of submanifolds, 
whose spectrum is relatively well understood
due to an intensive study of various quantum models 
motivated by the advent of nanotechnology. 
For unbounded tubes embedded in the Euclidean space 
and Dirichlet boundary conditions,
typical spectral properties of the Laplacian 
can be roughly summarised as follows:

\begin{enumerate}
\item
If the principal curvatures of a complete non-compact
submanifold $\Sigma\subset\Real^{\mathsf{dim}+\mathsf{codim}}$  
of dimension~$\mathsf{dim}$
vanish at infinity and the transport of the tube
cross-section $\omega\subset\Real^{\mathsf{codim}}$ along~$\Sigma$ is asymptotically
parallel, then the essential spectrum of the Dirichlet Laplacian in the tube coincides
with the half-line $[E_1,\infty)$, where~$E_1$ is the lowest
Dirichlet eigenvalue of~$\omega$: 
see \cite{KL}.  
\item
In the precedent situation,
if the transport of~$\omega$ along~$\Sigma$ is parallel,
then the Dirichlet Laplacian in the tube always possesses
discrete eigenvalues below~$E_1$
whenever~$\Sigma$ is parabolic (typically $\mathsf{dim} \le 2$) 
and non-trivially curved: 
see \cite{DE,ChDFK} for $\mathsf{dim}=1$ and
\cite{DEK2,CEK,LL1,LL3,Lu-Rowlett_2012} for $\mathsf{codim}=1$.
\item
On the other hand, 
if~$\Sigma$ is totally geodesic
and the transport of~$\omega$ along~$\Sigma$ is not parallel 
(in which case we say that the tube is \emph{twisted}), 
then the spectrum of the Dirichlet Laplacian is 
purely essential and Hardy-type inequalities hold: 
see \cite{EKK,KZ1} for $\mathsf{dim}=1$ and $\mathsf{codim}=2$.
\end{enumerate}

Analogous results are also known for tubes embedded 
in a Riemannian manifold~$\mathcal{A}$ 
instead of the Euclidean space:
see \cite{K1,K3,LL2,KK3,Haag-Lampart-Teufel_2015,Lampart-Teufel_2017}.
In the simplest non-trivial situation where $\Sigma$
is a curve in a two-dimensional surface~$\mathcal{A}$
and the cross-section~$\omega$ is a symmetric interval,
any non-trivial curvature of~$\Sigma$ and/or
non-negative Gauss curvature of~$\mathcal{A}$,
both vanishing at infinity in an appropriate sense, 
lead to the existence of discrete spectra (see~\cite{K1}),
while Hardy-type inequalities hold 
if~$\Sigma$ is a geodesic and~$\mathcal{A}$ is
non-positively curved (see~\cite{K3,KK3}).
Hence the negative curvature of~$\mathcal{A}$ acts as twisting,
and in fact the prominent example of ``strips on ruled surfaces''
from~\cite{K3} can be re-considered as ``twisted strips'' in~$\Real^3$.
This latter geometric setting  will be our primary concern in this paper.

In general, it is also known that the essential spectrum 
may change if the curvatures of~$\Sigma$ or~$\mathcal{A}$
do not vanish at infinity 
or the transport of the cross-section~$\omega$ along~$\Sigma$ 
is not asymptotically parallel. In summary, the curvature
of~$\Sigma$ has the tendency to lower the spectrum, while twisting has the opposite
effect: see~\cite{K6-with-erratum}. A striking illustration of the
latter is given in the recent paper~\cite{K11} of the first author in which it is
shown that the spectrum of the Dirichlet Laplacian in a three-dimensional tube in
$\mathcal{A}=\Real^3$ about one-dimensional flat submanifold $\Sigma=\Real$ can even
become purely discrete if the velocity of rotation of 
a two-dimensional non-symmetric cross-section~$\omega$ 
along~$\Sigma$ diverges at infinity. This fact can be seen as
an analogue for three-dimensional tubes of Donnelly's result for complete surfaces
\cite{Donnelly-81} which says that the spectrum of the Laplacian is purely discrete
if the Gauss curvature diverges to minus infinity at infinity.

The present paper was initiated as an attempt to generalise the Euclidean model
of~\cite{K11} to a manifold setting. However, we soon realised that
the features of this generalisation are actually quite different and lead to new
phenomena which we believe
are of interest for the spectral-geometric community. Indeed, the
model that we consider exhibits a sort of \emph{raise of dimension} at infinity, which
is responsible not only for 
a shift of the essential spectrum but also for the
existence of discrete eigenvalues.
 
\subsection{Model and main results}
To keep the model as simple as possible, 
we restrict to the one-dimensional base manifold
$\Sigma := \{(s,0,0) : s\in\Real\}$
that we regard as a submanifold of a ruled surface~$\mathcal{A}$ in~$\Real^3$
parameterised by the mapping
\begin{equation}\label{layer}
  \mathcal{L}: \Real^2 \to \Real^3:
  \left\{
  (s,t)\mapsto\big(s,t\cos\theta(s),t\sin\theta(s)\big)
  \right\}
  ,
\end{equation}
where $\theta: \Real\to\Real$ is a (locally) Lipschitz continuous function.
The Gauss curvature of~$\mathcal{A}$ is given by
$$
  K(s,t) = - \frac{\theta'(s)^2}{\big[1+\theta'(s)^2\,t^2\big]^2}\,,
$$
and in agreement  with the general result~\cite[Prop.~3.7.5]{Kli}
we observe that it is everywhere non-positive.
For the tube cross-section, we set $\omega := (a_1,a_2)$,
where $a_1 < a_2$ are two real numbers.
In the terminology of the previous subsection: $\mathsf{dim}=\mathsf{codim}=1$;
$\Sigma$ is parabolic and totally geodesic;
and~$\omega$ is symmetric or non-symmetric. The tube is defined as the image 
$$
  \Omega := \mathcal{L}\big(\Real\times(a_1,a_2)\big)
  \,,
$$
which has a geometrical meaning 
of a \emph{twisted strip} composed of segments $(a_1,a_2)$
translated along the straight line~$\Sigma$ in~$\Real^3$
with respect to the rotation angle~$\theta$:
see Figure~\ref{Fig}.
It is a special case of strips on surfaces studied in \cite{K1,K3,KK3}.

We are interested in the operator $-\Delta_D^\Omega$ in $\sii(\Omega)$
that acts as the Laplace-Beltrami operator in~$\Omega$ 
and satisfies Dirichlet boundary conditions on~$\partial\Omega$;
it is defined in a standard way as the Friedrichs extension 
of the differential expression initially defined on compactly supported 
smooth functions in~$\Omega$, see Section~\ref{Sec.Fermi} below.
Our primary interest is to investigate spectral properties of $-\Delta_D^\Omega$
in the regime of diverging twisting, \ie,
\begin{equation}\label{diverge}
  \lim_{|s| \to \infty} |\theta'(s)| = \infty \,.
\end{equation}
In this case, the Gauss curvature $K(s,t)$ tends to zero as $|s| \to \infty$
for every non-zero~$t$, but the fact that the transport
of~$\omega$ along~$\Sigma$ (when the latter is regarded as embedded in~$\Real^3$)
is far from being parallel (in which case $\theta'=0$)
leads to peculiar spectral properties. In fact, looking at Figure~\ref{Fig},
one can convince oneself that the two-dimensional strip~$\Omega$ 
actually looks at infinity like 
a three-dimensional tube of annular cross-section
\begin{equation}\label{annulus}
  A_{r_1,r_2} := \{x\in\Real^2 : r_1 < |x| < r_2\}
  \,, 
\end{equation}
where
\begin{equation}\label{radii} 
  r_1 := 
  \begin{cases}
    \min\{|a_1|,|a_2|\} & \mbox{if}\quad a_1 a_2 \ge 0 \,,
    \\
    0 & \mbox{if}\quad a_1 a_2 \le 0 \,,  
  \end{cases}
  \qquad
  r_2 := \max\{|a_1|,|a_2|\}
  \,.
\end{equation}
\begin{figure}[h!]
\fbox{
\begin{minipage}{\textwidth}
\begin{center}
\bigskip
\includegraphics[width=0.9\textwidth]{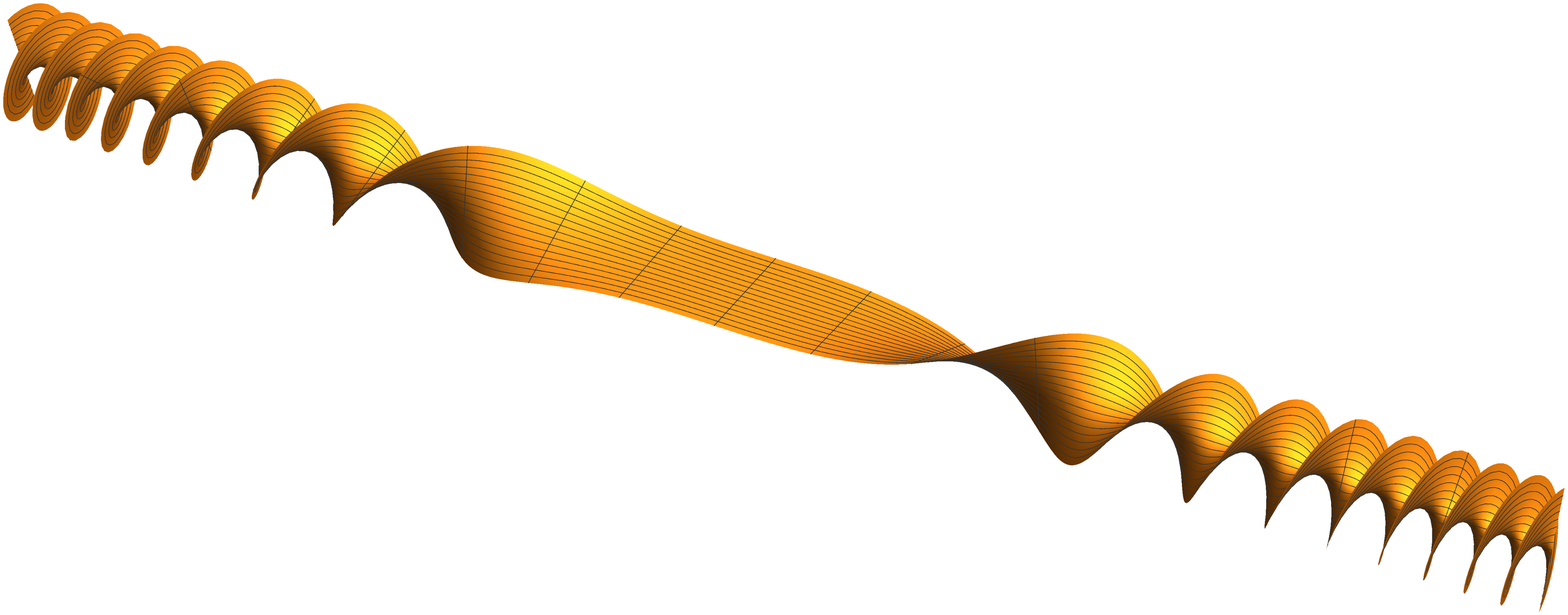}
\bigskip\\
\includegraphics[width=0.9\textwidth]{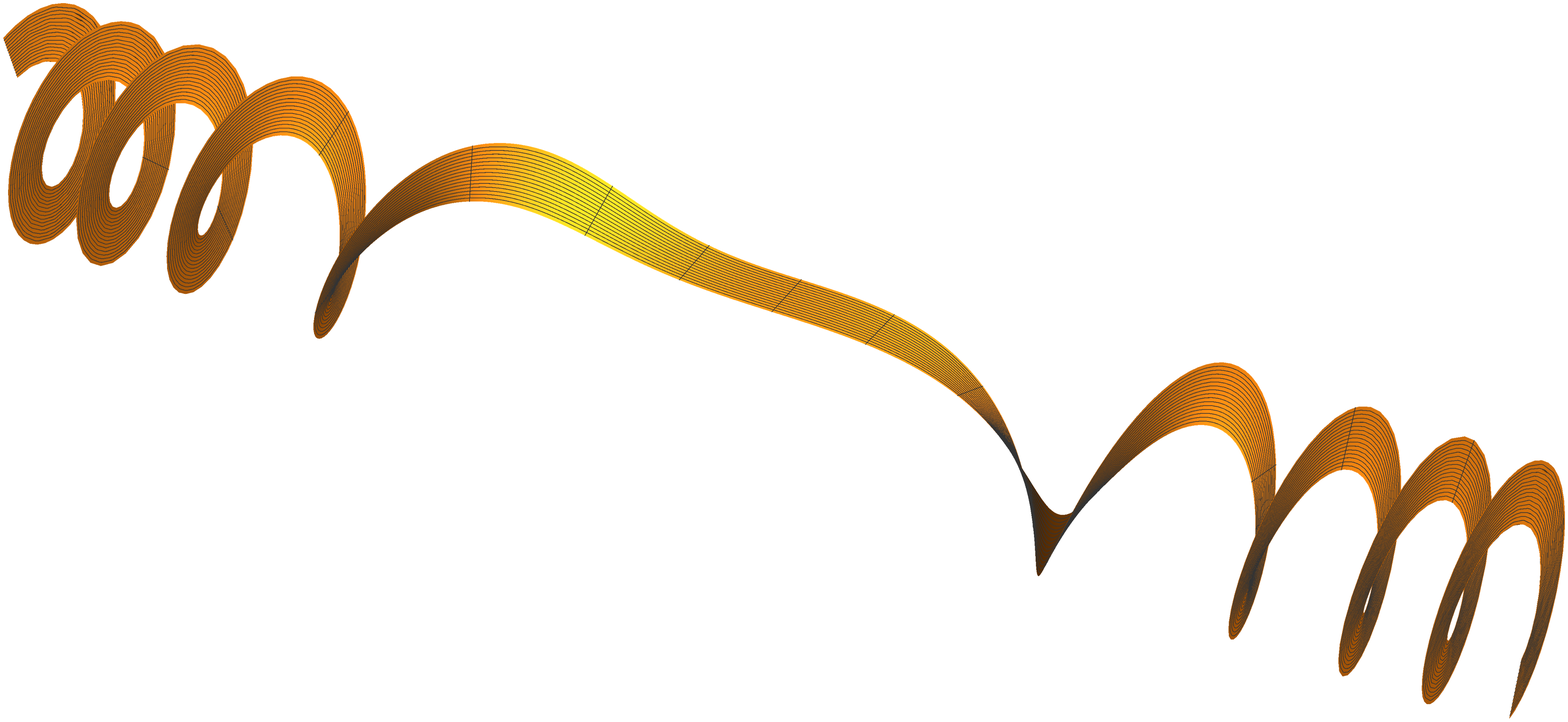}
\caption{\small Ruled strips with asymptotically diverging twisting
(case $a_1 a_2 \le 0$ above, and case $a_1 a_2 > 0$ below).}
\label{Fig}
\end{center}
\end{minipage}
}
\end{figure}

Our first main result about the essential spectrum shows 
that this intuition is correct:
\begin{Theorem}\label{Thm.ess}
If~\eqref{diverge} holds, then
\begin{equation}\label{ess} 
  \sigma_\mathrm{ess}(-\Delta_D^\Omega) = [\lambda_1,\infty)\,,
\end{equation}
where $\lambda_1$ denotes the lowest eigenvalue of the Dirichlet
Laplacian in~$A_{r_1,r_2}$.
\end{Theorem}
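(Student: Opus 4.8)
The standard route for identifying the essential spectrum of a Dirichlet Laplacian in an unbounded tube-like region is a Neumann--Dirichlet bracketing argument: cut the strip $\Omega$ into a compact central piece and two unbounded "arms", and show that the arms, equipped with Neumann conditions on the cut, already produce the half-line $[\lambda_1,\infty)$, while the Dirichlet-decoupled version cannot produce anything below $\lambda_1$. First I would pass to Fermi ("tubular") coordinates $(s,t)\in\Real\times(a_1,a_2)$, in which $-\Delta_D^\Omega$ is unitarily equivalent to an operator $H$ on $L^2(\Real\times(a_1,a_2),\,\sqrt{g}\,\der s\,\der t)$ whose quadratic form is
\begin{equation*}
  h[\psi] = \int_{\Real}\int_{a_1}^{a_2}
  \Big( g^{11}\,|\partial_s\psi|^2 + |\partial_t\psi|^2 \Big)\sqrt{g}\;\der t\,\der s,
  \qquad \sqrt{g} = \sqrt{1+\theta'(s)^2\,t^2},\quad g^{11} = \frac{1}{1+\theta'(s)^2\,t^2},
\end{equation*}
with form domain the closure of $C_0^\infty(\Real\times(a_1,a_2))$; this is where Section~\ref{Sec.Fermi} is used. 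The key observation is that as $|s|\to\infty$ the weight $\sqrt{g}$ behaves like $|\theta'(s)|\,|t|$, which is large, so the transverse measure concentrates and the effective transverse problem becomes a weighted one-dimensional operator whose spectrum reproduces the radial part of the Dirichlet Laplacian on the annulus $A_{r_1,r_2}$.

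**Upper bound $\sigma_\mathrm{ess}\subset[\lambda_1,\infty)$.** For this inclusion I would show $\inf\sigma_\mathrm{ess}(H)\ge\lambda_1$ by a Neumann bracketing: impose additional Neumann conditions on the segments $\{s=\pm n\}$ to decompose $H$ into a direct sum of a compact-strip part and outer parts $H_n^{\pm}$ on $(\pm n,\pm\infty)\times(a_1,a_2)$. By a Persson-type argument it suffices to prove $\liminf_{n\to\infty}\inf\sigma(H_n^{\pm})\ge\lambda_1$. On each outer piece, drop the nonnegative $s$-derivative term and estimate from below by the transverse operator; since on $(a_1,a_2)$ with weight $\sqrt{1+\theta'(s)^2t^2}$ the bottom transverse eigenvalue converges, as $|\theta'(s)|\to\infty$, to the lowest eigenvalue of $-\frac1{r\,}\partial_r(r\,\partial_r\cdot)$ on $(r_1,r_2)$ with the boundary conditions dictated by \eqref{radii} --- i.e. to $\lambda_1$, the lowest (radially symmetric) Dirichlet eigenvalue on the annulus --- the claim follows. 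The case distinction in \eqref{radii} enters exactly here: when $a_1a_2\le 0$ the interval $(a_1,a_2)$ straddles $t=0$, the weight degenerates there, and the effective radial interval is $(0,r_2)$ with a regular point at the origin (disk-type), whereas when $a_1a_2\ge 0$ one gets a genuine annulus $(r_1,r_2)$ with Dirichlet at both ends.

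**Lower bound $[\lambda_1,\infty)\subset\sigma_\mathrm{ess}$.** For the reverse inclusion I would construct singular (Weyl) sequences. Fix $\lambda=\lambda_1+k^2$ with $k\ge 0$. Let $\chi(r)$ be the (radially symmetric) ground-state eigenfunction of the Dirichlet Laplacian on $A_{r_1,r_2}$ for the eigenvalue $\lambda_1$, and consider trial functions of the form $\psi_j(s,t)=\varphi_j(s)\,e^{iks}\,\chi(|t|)$, where $\varphi_j$ is a sequence of cutoffs with supports escaping to $+\infty$ (so that on the support $|\theta'(s)|$ is as large as we like), suitably normalized and slowly varying. On such supports $\sqrt{g}\approx|\theta'(s)||t|$ and $g^{11}\approx 1/(\theta'(s)^2t^2)\to 0$, so the $s$-derivative contribution $g^{11}|\partial_s\psi_j|^2$ is suppressed by the measure and one checks $(H-\lambda)\psi_j\to 0$ while $\|\psi_j\|=1$; weak convergence to zero is automatic since the supports run off to infinity. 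A minor technical point is that $\chi(|t|)$ is only Lipschitz at $t=0$ in the straddling case, which is harmless for the form-domain membership. This yields $[\lambda_1,\infty)\subset\sigma_\mathrm{ess}(H)$, and combined with the upper bound proves \eqref{ess}.

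**Main obstacle.** The delicate part is making the asymptotic reduction of the transverse problem quantitative and uniform: one must control how fast, and in what topology, the weighted transverse operator on $(a_1,a_2)$ converges to the radial annulus operator as $|\theta'(s)|\to\infty$, and in particular rule out the transverse ground-state energy dipping below $\lambda_1$ along the way (for the upper bound) while still being reachable (for the Weyl sequences). Equivalently, one should verify that $\lambda_1$ is indeed the infimum of the limiting radial operator and that the full two-dimensional ground-state energy of the outer strips converges to it rather than to something strictly smaller; a rescaling $t\mapsto\theta'(s)t$ (where $\theta'(s)\ne0$) together with a careful treatment of the vanishing set of $\theta'$ should handle this, but it is the step requiring the most care.
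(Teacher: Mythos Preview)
Your Neumann-bracketing route to $\inf\sigma_\mathrm{ess}(H)\ge\lambda_1$ is exactly what the paper does (Lemma~\ref{Lem.ess2}): cut at $|s|=s_0$, discard the longitudinal derivative, reduce to the transverse eigenvalue $\lambda(s)$, and show $\liminf_{|s|\to\infty}\lambda(s)\ge\lambda_1$. The paper carries out that last step by a compactness argument on the transverse eigenfunctions rather than by rescaling, but the strategy is the same.

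The genuine gap is in your Weyl-sequence construction. With $\psi_j(s,t)=\varphi_j(s)\,e^{iks}\,\chi(|t|)$ you explicitly observe that $g^{11}|\partial_s\psi_j|^2$ is suppressed as $|\theta'|\to\infty$; but that suppression kills the very term that was supposed to contribute $k^2$. Concretely, the $s$-derivative part of the form is $\int f^{-1}|\partial_s\psi_j|^2$, and the $k^2$-contribution $k^2\!\int f^{-1}|\varphi_j|^2|\chi|^2$ is $o(\|\psi_j\|_\Hilbert^2)$ because $\|\psi_j\|_\Hilbert^2=\int f|\varphi_j|^2|\chi|^2$ grows like $|\theta'|$ on the support while the former is $O(1)$. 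Hence your Rayleigh quotient converges to $\lambda_1$ for every $k$, and your sequence only reaches the single point $\lambda_1$, not the half-line. The paper's remedy (Lemma~\ref{Lem.ess1}) is to replace the plane wave $e^{iks}$ by the ``stretched'' phase $\omega^m(s)=\exp\!\big(im\!\int_0^s|\theta'|\big)$, so that $|\partial_s\omega^m|^2=m^2|\theta'|^2$ exactly compensates the $f^{-2}$ in the metric and survives in the limit as the centrifugal term $m^2/t^2$; correspondingly the transverse profile must be the eigenfunction $\psi_1^m$ of $L_\infty^m=-|t|^{-1}\partial_t|t|\partial_t+m^2/t^2$, not the radial ground state $\chi$. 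The range of $m\mapsto\lambda_1^m$ is then $[\lambda_1,\infty)$, which yields the full inclusion. This change of phase is the missing idea; without it the argument does not establish $[\lambda_1,\infty)\subset\sigma_\mathrm{ess}(H)$.
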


This theorem puts into evidence a difference with respect 
to twisted strips with asymptotically vanishing twisting,
\ie\ $\theta'(s) \to 0$ as $|s| \to \infty$,
in which case it is known that
$$
\inf\sigma_\mathrm{ess}(-\Delta_D^\Omega) = [E_1,\infty) \,,
$$
where $E_1:=\pi^2/(a_2-a_1)^2$ is the lowest eigenvalue  
of the Dirichlet Laplacian in the cross-section $\omega=(a_1,a_2)$.
Depending on values of~$a_1$ and~$a_2$,
the number~$\lambda_1$ may be either below or above~$E_1$
(and by continuity there are also~$a_1$ and~$a_2$
for which it equals~$E_1$),
but it is always bounded from above by 
$E_2 := (2\pi)^2/(a_2-a_1)^2$,
the second eigenvalue of the Dirichlet Laplacian in~$\omega$.
Hence the present situation
substantially differs from the case of 
three-dimensional tubes with asymptotically diverging twisting
studied in~\cite{K11};
in view of~\eqref{ess}, there is always some essential
spectrum in twisted strips, 
while it is shown in~\cite{K11} that the spectrum is purely discrete
for tubes satisfying~\eqref{diverge} 
in a regime analogous to $a_1 a_2 > 0$.
Also, technically the proof of Theorem~\ref{Thm.ess}
differs from the standard proofs for asymptotically flat geometries
as well as from the demonstration given in~\cite{K11}.

Our second main result shows that there is always spectrum  
below~$\lambda_1$ provided that the cross-section~$\omega$ 
is twisted with respect to a point \emph{inside} 
the interval:
\begin{Theorem}\label{Thm.disc.intro}
If~\eqref{diverge} and $a_1 a_2 \le 0$ hold, 
then 
$$
  \sigma_\mathrm{disc}(-\Delta_D^\Omega) \cap (0,\lambda_1) \not= \varnothing
  \,.
$$
\end{Theorem}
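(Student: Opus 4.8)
The plan is to exhibit a test function $\psi$ in the form domain of $-\Delta_D^\Omega$ with negative Rayleigh quotient relative to $\lambda_1$, i.e.\ satisfying $Q[\psi] := \|\nabla\psi\|^2 - \lambda_1\|\psi\|^2 < 0$, where $Q$ is the quadratic form written in the Fermi-type coordinates $(s,t)\in\Real\times(a_1,a_2)$ adapted to the parameterisation~\eqref{layer} (see Section~\ref{Sec.Fermi}). Combined with Theorem~\ref{Thm.ess}, any such $\psi$ forces the existence of discrete spectrum below $\lambda_1$. The natural ansatz is the usual variational trick for twisted/curved tubes: take $\psi(s,t) = \varphi_\eps(s)\,\chi_1(t) + \eps\,u(s,t)$, where $\chi_1$ is the lowest Dirichlet eigenfunction of the cross-section $\omega=(a_1,a_2)$ (eigenvalue $E_1$), $\varphi_\eps$ is a slowly varying cutoff-type function (e.g.\ $\varphi_\eps(s)=\varphi(\eps s)$ with $\varphi$ compactly supported, or a suitable plateau/mollified indicator), and $u$ is a correction term to be chosen so that the leading-order positive contributions are cancelled.

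The key computation is to expand $Q[\psi]$ in powers of $\eps$. The term $\|\nabla(\varphi_\eps\chi_1)\|^2$ produces $E_1\|\varphi_\eps\chi_1\|^2$ plus a longitudinal-derivative piece of order $\eps^2$ (since $\varphi_\eps'$ carries a factor $\eps$) plus cross terms coming from the nontrivial metric: the metric coefficient $g = 1 + \theta'(s)^2 t^2$ blowing up under~\eqref{diverge} is precisely what lowers the effective spectral threshold from $E_1$ to $\lambda_1$. Because $a_1 a_2 \le 0$, the interval $\omega$ contains $t=0$, so $r_1=0$ and the asymptotic annulus $A_{0,r_2}$ is in fact a punctured disk, whose Dirichlet eigenvalue $\lambda_1$ equals that of the full disk of radius $r_2$ (the puncture is removable for $H^1_0$). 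One then chooses the correction $u$ to solve, at leading order, the cross-sectional problem realising the disk eigenfunction: heuristically, $\psi$ should look, for large $|s|$, like the ground state of the three-dimensional tube of cross-section $A_{0,r_2}$, reassembled through the map $(s,t)\mapsto(s,t\cos\theta(s),t\sin\theta(s))$. The gain of $\lambda_1 - E_1 \le 0$ (or, when $\lambda_1 > E_1$, a more careful argument using a genuine localisation in the region where $|\theta'|$ is large) has to beat the positive $O(\eps^2)$ cost of bending $\varphi_\eps$ and of the correction $u$. Making the two $\eps$-scalings compatible — the spreading of $\varphi_\eps$ against the decay/cost of $u$ — is the crux, and I would optimise $\eps$ at the end exactly as in the classical Goldstone--Jaffe / Duclos--Exner argument.

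The step I expect to be the main obstacle is the construction of the correction term $u$ and the verification that the associated quadratic cost is genuinely lower order: unlike the flat case where one mollifies the transverse eigenfunction by a function linear in the bending, here the transverse operator is $s$-dependent (through $g(s,t)$) and degenerates, as $|s|\to\infty$, to the radial part of the disk Laplacian after the change of variables; one must therefore work with an $s$-dependent transverse profile interpolating between $\chi_1$ and (the pullback of) the disk ground state, controlling both the $L^2$ and $H^1$ norms uniformly. A clean way to organise this is to first reduce, via the unitary transformation to the Fermi coordinates already set up in Section~\ref{Sec.Fermi}, to a Schr\"odinger-type form in which the twisting appears as an effective potential well of depth $\to \lambda_1 - E_1$ localised at infinity; the test function is then literally the ground state of a one-dimensional comparison operator, and positivity of the remainder is a straightforward estimate. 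I would present the argument in that reduced form to keep the computation transparent.
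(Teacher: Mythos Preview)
Your proposal has a genuine gap in its central mechanism. The ansatz built on the flat transverse eigenfunction $\chi_1$ (eigenvalue $E_1$) is the wrong starting point, and the heuristic of an ``effective potential well of depth $\lambda_1 - E_1$'' is misleading: in the regime $a_1 a_2 \le 0$ one typically has $\lambda_1 > E_1$ (for instance $\lambda_1 = (j_{0,1}/a_2)^2 > (\pi/(2a_2))^2 = E_1$ in the symmetric case $a_2=-a_1$; see the discussion after~\eqref{upper}), so there is no gain at this level. Worse, the leading contribution of $\varphi_\eps\chi_1$ to $h[\,\cdot\,]-\lambda_1\|\cdot\|_\Hilbert^2$ is $\int|\varphi_\eps(s)|^2\,I(s)\,\der s$ with $I(s)=\int_{a_1}^{a_2} f(s,t)\big(|\chi_1'|^2-\lambda_1|\chi_1|^2\big)\,\der t$; under~\eqref{diverge} one has $I(s)\sim|\theta'(s)|\int_{a_1}^{a_2}|t|\big(|\chi_1'|^2-\lambda_1|\chi_1|^2\big)\,\der t\to+\infty$, the $t$-integral being \emph{strictly positive} by~\eqref{lambda1.bis} since $\chi_1\neq\psi_1$. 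No $\eps$-small correction $u$ can compensate a term that diverges with the spreading of $\varphi_\eps$. Your fallback of an $s$-dependent interpolation towards the disk ground state is not carried out, and once completed it essentially reduces to the paper's construction anyway.

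The paper takes the transverse profile to be $\psi_1$ --- the first eigenfunction of the asymptotic operator $L_\infty$ of~\eqref{L.infinity}, \ie\ the radial disk ground state --- from the outset. With $\Psi_n=\varphi_n\psi_1$ and a plateau cutoff $\varphi_n$ (equal to~$1$ on $[-n,n]$, vanishing outside $[-2n,2n]$), an integration by parts using $-|t|^{-1}(|t|\psi_1')'=\lambda_1\psi_1$ yields
\[
  h[\Psi_n]-\lambda_1\,\|\Psi_n\|_\Hilbert^2
  =\left\|\frac{\varphi_n'\psi_1}{f}\right\|_\Hilbert^2
  +\int \frac{\varphi_n^2\,\psi_1\,\psi_1'}{t\,f}\,.
\]
The second term is strictly negative for every $n$, because $\psi_1\ge 0$ is increasing on $(a_1,0)$ and decreasing on $(0,a_2)$, so that $\psi_1\psi_1'/t\le 0$ with strict inequality on a set of positive measure; moreover this term is monotone decreasing in $n$. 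The first term is $O(n^{-1})$, hence the right-hand side is negative for $n$ large. Notice that~\eqref{diverge} is not used here: the inequality $\inf\sigma(H)<\lambda_1$ holds for \emph{every} twisting $\theta$ once $a_1a_2\le 0$ (this is Theorem~\ref{Thm.disc}), and Theorem~\ref{Thm.disc.intro} then follows by combining with Theorem~\ref{Thm.ess}. The mechanism is thus a sign/monotonicity argument intrinsic to $\psi_1$, not a comparison of the thresholds $E_1$ and $\lambda_1$.
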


To show that discrete eigenvalues of~$-\Delta_D^\Omega$ 
also exist in some cases $a_1 a_2 >0$ remains an open problem.
On the other hand, we conjecture that the discrete spectrum
is empty whenever $a_1 a_2$ is large enough.

Finally, we leave as an open problem the study of the nature
of the essential spectrum located in Theorem~\ref{Thm.ess}
(possible existence and location 
or absence of embedded eigenvalues and the absence of
singular continuous spectrum).
In the case of twisting vanishing at infinity,
this study was performed in~\cite{KTdA} 
with help of Mourre theory.
In the present regime~\eqref{diverge}, however,
the choice of the conjugate operator is not clear.

The paper is organised as follows:
Section~\ref{Sec.pre} is devoted to necessary prerequisites,
in particular the definition of the Laplacian $-\Delta_D^\Omega$,
and Theorems~\ref{Thm.ess} and~\ref{Thm.disc.intro}
are proved in Sections~\ref{Sec.ess} and~\ref{Sec.disc}, respectively.

\section{Preliminaries}\label{Sec.pre}
%
Let us start with general preliminaries
valid for an arbitrary twisting angle~$\theta$.	

\subsection{Fermi coordinates}\label{Sec.Fermi} 
Using the (Fermi) coordinates $(s,t)$ defined by~\eqref{layer},
$\Omega$~can be identified with the Riemannian manifold 
$\big(\Real\times(a_1,a_2), G\big)$,
where~$G$ is the metric induced by $\mathcal{L}$,
\begin{equation}\label{metric}
  G := \nabla\mathcal{L} \cdot (\nabla\mathcal{L})^\top =
  \begin{pmatrix}
    f^2 & 0 \\
    0 & 1
  \end{pmatrix}
  \qquad\mbox{with}\qquad
  f(s,t) := \sqrt{1+\theta'(s)^2\,t^2}
  \,.
\end{equation}
The function~$f$ is the Jacobian of~$\mathcal{L}$. 
In these coordinates, $-\Delta_D^\Omega$ can be identified 
with the self-adjoint operator~$H$ in the Hilbert space
$$
  \Hilbert := \sii\big(\Real\times(a_1,a_2),f(s,t)\,\der s \, \der t\big)
$$
associated with the closure~$h$ of the form
$$
  \dot{h}[\Psi] := \big(\partial_i\Psi,G^{ij}\partial_j\Psi\big)_{\Hilbert}
  = \int f^{-1} \, |\partial_s\Psi|^2 
  + \int f \, |\partial_t\Psi|^2
  \,, \qquad \Dom(\dot{h}) := C_0^\infty\big(\Real\times(a_1,a_2)\big)\,.
$$
Here $\int$ denotes the integration over $\Real \times (a_1,a_2)$
and the arguments of the functions are suppressed for brevity.
Furthermore, we adopt the standard notation~$G^{ij}$ 
for the coefficients of the inverse metric~$G^{-1}$
and the Einstein summation convention is used with the range 
of indices being $1,2$. 
Throughout the paper, the first and second variables 
are consistently denoted by~$s$ and~$t$, respectively.
 
By definition, we have
$$
  \Dom(h)
  = \overline{C_0^\infty\big(\Real\times(a_1,a_2)\big)}^{\|\cdot\|_{\Hilbert_1}}
  =: W^{1,2}\big(\Real\times(a_1,a_2),G\big)
$$
with the Sobolev-type norm $\|\cdot\|_{\Hilbert_1}$ given by
\begin{equation}\label{h-norm}
  \|\Psi\|_{\Hilbert_1} 
  := 
  \sqrt{ \dot{h}[\Psi]
  + \|\Psi\|_\Hilbert^2}
  =
  \sqrt{
  \int f^{-1} \, |\partial_s\Psi|^2 
  + \int f \, |\partial_t\Psi|^2
  + \int f \, |\Psi|^2
  }
  \,.
\end{equation}
In a distributional sense, $H$~acts as
\begin{equation}\label{LB}
  H = - |G|^{-1/2} \partial_i |G|^{1/2} G^{ij} \partial_j
  = - f^{-1} \partial_s f^{-1} \partial_s
  - f^{-1} \partial_t f \partial_t
  \qquad\mbox{with}\qquad
  |G| := \det(G) = f^2.
\end{equation}
\begin{Remark}\label{Rem.unitary}
For the purpose of this remark only,
let us assume for a moment extra hypotheses 
about the differentiability of~$\theta$,
say  
\begin{equation}\label{Ass.extra}
  \theta'', \, \theta''' \in L_\mathrm{loc}^\infty(\Real) 
  \,.
\end{equation}
Then~$H$ is unitarily equivalent to a Schr\"odinger-type operator 
(see \cite[Sec.~3.2]{KTdA} or \cite[Sec.~4.4]{FK4})
$$
  \hat{H} = -\partial_s f^{-2} \partial_s - \partial_t^2 + V
  \qquad \mbox{in} \qquad
  \sii\big(\Real\times(a_1,a_2)\big) 
  \,,
$$
with potential 
$$
  V := V_1 + V_2 
  \,, \qquad
  V_1 := -\frac{5}{4} \frac{(\partial_s f)^2}{f^4} 
  + \frac{1}{2} \frac{\partial_s^2 f}{f^3}
  \,, \qquad
  V_2 := -\frac{1}{4} \frac{(\partial_t f)^2}{f^2}
  + \frac{1}{2} \frac{\partial_t^2 f}{f}
  \,.
$$
More specifically, $\hat{H}$ is the operator associated 
with the closure~$\hat{h}$ of the form
$$
  \dot{\hat{h}}[\Psi] := 
  \int f^{-2} \, |\partial_s \Psi|^2 
  + \int|\partial_t \Psi|^2  
  + \int V \, |\Psi|^2
  \,, \qquad
  \Dom(\dot{\hat{h}}) := C_0^\infty\big(\Real\times(a_1,a_2)\big)
  \,.
$$
Using the special form of~$f$ given in~\eqref{metric}, 
we obtain
\begin{align}
  V_1(s,t) &= -\frac{7}{4}
  \frac{\theta'(s)^2 \, \theta''(s)^2 \, t^4}
  {\big[1+\theta'(s)^2\,t^2\big]^3}
  + \frac{1}{2}
  \frac{\theta''(s)^2 \, t^2}
  {\big[1+\theta'(s)^2\,t^2\big]^2}
  + \frac{1}{2}
  \frac{\theta'(s) \, \theta'''(s) \, t^3}
  {\big[1+\theta'(s)^2\,t^2\big]^2}
  \,, 
  \label{V1}
  \\
  V_2(s,t) &= \frac{\theta'(s)^2 \,
  \big[2-\theta'(s)^2 \, t^2\big]}
  {4\,\big[1+\theta'(s)^2\,t^2\big]^2}
  \,.
  \label{V2}
\end{align}
Note that~\eqref{V2} makes sense 
under the present minimal (Lipschitz)
regularity assumptions on~$\theta$.

In the present work,
in order to avoid the additional regularity hypotheses,
we do not use the unitarily equivalent operator~$\hat{H}$ 
to establish spectral properties of~$H$.
However, $\hat{H}$~will be occasionally recalled 
to get some insights into the problem.
\end{Remark}

\subsection{A lower bound}
We follow the method of~\cite{K3} (see also \cite[Sec.~7.2]{KK3})
to get a convenient lower bound to~$H$.
In the sense of quadratic forms in~$\Hilbert$, we have  
\begin{equation}\label{l.bound}
  H \ge - f^{-1} \partial_s f^{-1} \partial_s + \lambda
  \,,
\end{equation}
where, for almost every fixed $s \in \Real$, 
$\lambda(s)$ denotes the lowest eigenvalue of the ``transverse'' operator 
\begin{equation}\label{transverse}
  L_s := - f(s,t)^{-1} \, \frac{\der}{\der t} \, f(s,t) \, \frac{\der}{\der t}
  \qquad \mbox{in} \qquad
  \Hilbert_s := \sii\big((a_1,a_2),f(s,t)\,\der t\big)
  \,,
\end{equation}
subject to Dirichlet boundary conditions at~$a_1$ and~$a_2$.
(With an abuse of notation, we denote by the same symbol~$\lambda$
both the function on~$\Real$ and $\lambda \otimes 1$ on $\Real \times (a_1,a_2)$.)
More precisely, $L_s$~is the operator associated with 
the closure $\ell_s$ of the form
$$
  \dot{\ell}_s[\psi] := \int_{a_1}^{a_2} |\psi'(t)|^2 \, f(s,t)\,\der t
  \,, \qquad 
  \Dom(\dot{\ell}_s) := C_0^\infty\big((a_1,a_2)\big)
  \,,
$$
in the Hilbert space~$\Hilbert_s$.
We have
\begin{equation}\label{lambda}
  \lambda(s)
  = \inf_{\stackrel[\psi\not=0]{}{\psi \in C_0^\infty((a_1,a_2))}}
  \,
  \frac{\displaystyle \int_{a_1}^{a_2} |\psi'(t)|^2 \, f(s,t) \, \der t}
  {\displaystyle \int_{a_1}^{a_2} |\psi(t)|^2 \, f(s,t) \, \der t}
  \,.
\end{equation}
\begin{Remark} 
The operator~$L_s$ is unitarily equivalent to 
\begin{equation}\label{unitarily}
  \hat{L}_s := -\frac{\der^2}{\der t^2} + V_2(s,t) 
  \qquad \mbox{in} \qquad
  \sii\big((a_1,a_2)\big)
  \,,
\end{equation}
where~$V_2$ is defined in~\eqref{V2}.
More precisely, $\hat{L}_s$~is the operator associated with 
the closure $\hat{\ell}_s$ of the form
$$
  \dot{\hat{\ell}}_s[\psi] := \int_{a_1}^{a_2} |\psi'(t)|^2 \,\der t
  + \int_{a_1}^{a_2} V_2(s,t) \, |\psi(t)|^2 \,\der t
  \,, \qquad 
  \Dom(\dot{\hat{\ell}}_s) := C_0^\infty\big((a_1,a_2)\big)
  \,.
$$
Consequently, we also have
\begin{equation}\label{lambda.bis}
  \lambda(s)
  = \inf_{\stackrel[\psi\not=0]{}{\psi \in C_0^\infty((a_1,a_2))}}
  \,
  \frac{\displaystyle \int_{a_1}^{a_2} |\psi'(t)|^2 \, \der t
  + \int_{a_1}^{a_2} V_2(s,t) \, |\psi(t)|^2 \,\der t}
  {\displaystyle \int_{a_1}^{a_2} |\psi(t)|^2 \, \der t}
  \,.
\end{equation}

\end{Remark}

\subsection{An effective operator}\label{Sec.eff}
Now let us look at the case of asymptotically diverging twisting.
Assuming~\eqref{diverge}, we already saw 
that the strip~$\Omega$ looks at infinity like 
a three-dimensional tube of annular cross-section~\eqref{annulus}.
If~$r_1$ is positive (\ie~$a_1 a_2 > 0$), 
$A_{r_1,r_2}$ is an annulus of radii~$r_1$ and~$r_2$,  
while in the case $r_1=0$ (\ie~$a_1 a_2 \le 0$) 
$A_{0,r_2}$~can be identified with a disk of radius~$r_2$
(which we understand as a degenerate annulus,
to keep the same terminology). By the rotational symmetry, 
the lowest Dirichlet eigenvalue of~$A_{r_1,r_2}$ satisfies
\begin{equation}\label{lambda1}
  \lambda_1
  = \inf_{\stackrel[\psi\not=0]{}{\psi \in C_0^\infty((r_1,r_2))}}
  \,
  \frac{\displaystyle \int_{r_1}^{r_2} |\psi'(r)|^2 \, r \, \der r}
  {\displaystyle \int_{r_1}^{r_2} |\psi(r)|^2 \, r \, \der r}
  \,.
\end{equation}

To further support the above geometric intuition,
let us notice that the Jacobian $f(s,t)$ 
asymptotically decouples as follows
\begin{equation}\label{decoupling} 
  \lim_{|s|\to\infty} 
  \frac{f(s,t)}{f_\infty(s,t)} = 1 
  \qquad \mbox{with} \qquad
  f_\infty(s,t) := |\theta'(s)|\,|t|
\end{equation}
for every non-zero~$t$ (that is, for all $t \in (a_1,a_2)$ if $a_1 a_2 > 0$).
It is therefore natural to consider the following
asymptotic version of the operator~$L_s$ introduced in~\eqref{transverse}:  
\begin{equation}\label{L.infinity} 
  L_\infty := - |t|^{-1} \, \frac{\der}{\der t} \, |t| \, \frac{\der}{\der t}
  \qquad \mbox{in} \qquad
  \Hilbert_\infty := \sii\big((a_1,a_2),|t|\,\der t\big)
  \,,
\end{equation}
subject to Dirichlet boundary conditions at~$a_1$ and~$a_2$.
More precisely, $L_\infty$~is the operator associated with 
the closure $\ell_\infty$ of the form 
$$
  \dot{\ell}_\infty[\psi] := \int_{a_1}^{a_2} |\psi'(t)|^2 \, |t|\,\der t
  \,, \qquad 
  \Dom(\dot{\ell}_\infty) := C_0^\infty\big((a_1,a_2)\big)
  \,,
$$
in the Hilbert space~$\Hilbert_\infty$.
It is easy to check that the lowest eigenvalue of~$L_\infty$
coincides with~\eqref{lambda1}, \ie   
\begin{equation}\label{lambda1.bis}
  \lambda_1
  = \inf_{\stackrel[\psi\not=0]{}{\psi \in C_0^\infty((a_1,a_2))}}
  \,
  \frac{\displaystyle \int_{a_1}^{a_2} |\psi'(t)|^2 \, |t| \, \der t}
  {\displaystyle \int_{a_1}^{a_2} |\psi(t)|^2 \, |t| \, \der t}
  \,.
\end{equation}
The claim is indeed obvious for $a_1 a_2 \ge 0$.

If $a_1 a_2 \le 0$, the lowest eigenvalue of~$L_\infty$ is actually determined 
by the minimum between the lowest eigenvalues of the Dirichlet Laplacian
in disjoint disks of radii~$|a_1|$ and~$a_2$.
More specifically, if $a_1 a_2 < 0$, 
the eigenfunction~$\psi_1$ of~$L_\infty$
corresponding to~$\lambda_1$ satisfies the boundary value problem
\begin{equation}\label{Neumann}
\left\{
\begin{aligned}
  - t^{-1} \big(t \,\psi_1'(t)\big)' &= \lambda_1 \psi_1(t) \,, 
  && t \in (a_1,0) \cup (0,a_2) \,, 
  \\
  \psi_1(a_1) = \psi_1(a_2) &=0 \,,
  \\
  \psi_1'(0^-) = \psi_1'(0^+) &=0 \,.
\end{aligned}
\right.
\end{equation}
Hence~$\psi_1$ is not necessarily continuous at~$0$
and the problem is decoupled into the disjoint intervals 
$(a_1,0)$ and $(0,a_2)$ via the extra Neumann boundary condition.  

Standard arguments show that $\psi_1$ is infinitely smooth 
in $(a_1,a_2)\setminus\{0\}$
and can be chosen non-negative.
In fact, $\psi_1$ is then positive if $a_1 a_2 \ge 0$.	 
If $a_1 a_2 < 0$, however, $\psi_1$ is positive only in one 
of the intervals $(a_1,0)$ or $(0,a_2)$,
while it is necessarily zero in the other 
(namely in the smaller).
The corresponding eigenvalue~$\lambda_1$ is always simple 
unless $|a_1| = a_2$, in which case it is doubly degenerate
(but it is of course simple as an eigenvalue of 
the Dirichlet Laplacian in $A_{0,r_2}$). 

\begin{Remark}
In particular, if $r_1=0$ (\ie~$a_1 a_2 \le 0$), 
then
\begin{equation}\label{lambda1.sym}
  \lambda_1 = 
  \left(\frac{j_{0,1}}{r_2}\right)^2 
  \,,
\end{equation}
where $j_{0,1} \approx 2.4$ 
is the first positive zero of the Bessel function~$J_0$.
If moreover $r_2=|a_1|$ (\ie~$|a_1| \ge a_2$), then
\begin{equation}\label{psi1.sym}
  \psi_1(t) = 
  \begin{cases}
    J_0\big(\sqrt{\lambda_1} t\big) 
    & \mbox{if} \quad t\in(a_1,0) \,,
    \\
    0 
    & \mbox{if} \quad t\in(0,a_2) \,.
  \end{cases}
\end{equation}
\end{Remark}

Let us denote by $\{\lambda_k\}_{k=1}^\infty$ and $\{\psi_k\}_{k=1}^\infty$
the set of eigenvalues of~$L_\infty$
and corresponding eigenfunctions, respectively.
We choose~$\psi_k$ real-valued and normalised to~$1$ in~$\Hilbert_\infty$.
The boundary value problem~\eqref{Neumann}
holds for each couple $(\lambda_k,\psi_k)$ instead of $(\lambda_1,\psi_1)$.
Note that the set $\{\lambda_k\}_{k=1}^\infty$ coincides 
with the eigenvalues of the Dirichlet Laplacian in the annulus~$A_{r_1,r_2}$
associated with radially symmetric eigenfunctions.

\section{The essential spectrum}\label{Sec.ess}
%
Let 
$$ 
  E_k := \left(\frac{k\pi}{a_2-a_1}\right)^2
  \,, \qquad 
  k=1,2,\dots
  \,,
$$
denote the eigenvalues of the Dirichlet Laplacian in $\sii\big((a_1,a_2)\big)$.
The corresponding normalised eigenfunctions are given by
\begin{equation}\label{chi} 
  \chi_k(t) := 
  \sqrt{\frac{2}{a_2-a_1}} \, \sin\big(\sqrt{E_k}\,(t-a_1)\big)
  \,.
\end{equation}
It was shown in~\cite[Thm.~5.1]{KK3} that
\begin{equation}\label{vanish}
  \lim_{|s| \to \infty} |\theta'(s)| = 0
  \qquad \Longrightarrow \qquad
  \sigma_\mathrm{ess}(H) = [E_1,\infty) 
  \,.
\end{equation}
This result is expected because under the condition
the strip~$\Omega$ looks at infinity like a straight strip of width~$a_2-a_1$
and the spectrum of the latter coincides with the interval $[E_1,\infty)$.
 
Our goal is to show that the condition~\eqref{diverge}
actually shifts the essential spectrum
in the sense of Theorem~\ref{Thm.ess}. 
This result is also intuitively clear because, 
under condition~\eqref{diverge},
the strip~$\Omega$ looks at infinity like 
a three-dimensional tube of cross-section~$A_{r_1,r_2}$,
and~$\lambda_1$ is the lowest eigenvalue of
the Dirichlet Laplacian in~$A_{r_1,r_2}$,
see Section~\ref{Sec.eff}. However, giving a rigorous proof
of this result requires some work, as we shall see.

Using~$\chi_1$ with $a_1:=r_1$ and $a_2:=r_2$ 
as a test function in~\eqref{lambda1}, 
we obtain the upper bound
\begin{equation}\label{upper}
  \lambda_1
  < \left(\frac{\pi}{r_2-r_1}\right)^2 
\end{equation}
(the strict inequality follows from the fact that the chosen test function
differs from the eigenfunction~$\psi_1$).
If $a_1 a_2 \ge 0$, then $r_2-r_1 = a_2-a_1$,
so~\eqref{upper} shows that $\lambda_1 < E_1$ in this case.
On the other hand, if the interval $(a_1,a_2)$ is symmetric,
\ie~$a_2=-a_1$ (a particular case of $a_1 a_2 \le 0$),
it follows from~\eqref{lambda1.sym} that $E_1 < \lambda_1 < E_2$.
If $a_1 a_2 \le 0$ without any further restriction on~$a_1$ and~$a_2$, 
we have  $r_2-r_1 = r_2 \ge (a_2-a_1)/2$,
so~\eqref{upper} together with this crude bound 
shows only that $\lambda_1 < E_2$.
Summing up, we see that, depending on values of~$a_1$ and~$a_2$,
$\lambda_1$ may be either below or above~$E_1$
(and by continuity there are also~$a_1$ and~$a_2$
for which it equals~$E_1$),
but it is always bounded from above by~$E_2$.

Theorem~\ref{Thm.ess} is established in two steps.
First, we establish a lower bound to the threshold 
of the essential spectrum:
\begin{Lemma}\label{Lem.ess2}
If~\eqref{diverge} holds, then 
$
  \inf\sigma_\mathrm{ess}(H) \ge \lambda_1
$. 
\end{Lemma}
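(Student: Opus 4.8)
The plan is to localise the problem at infinity and reduce the lower bound to a spectral statement about the transverse operators~$L_s$. Since $\inf\sigma_\mathrm{ess}(H)$ is determined by the behaviour of $H$ outside arbitrarily large balls, I would fix $\eps>0$ and produce $R>0$ such that the quadratic form of $H$ restricted to functions supported in $|s|>R$ is bounded below by $\lambda_1-\eps$. By the lower bound~\eqref{l.bound}, for such $\Psi$,
$$
  h[\Psi] \ge \int_{|s|>R} f^{-1}\,|\partial_s\Psi|^2
  + \int_{|s|>R} \lambda(s)\,|\Psi|^2\,f\,\der s\,\der t
  \ge \Big(\inf_{|s|>R}\lambda(s)\Big)\,\|\Psi\|_\Hilbert^2\,,
$$
so everything comes down to showing that $\lambda(s)\to\lambda_1$ as $|s|\to\infty$. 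Combined with a standard Neumann bracketing / decomposition argument (cutting $\Real$ into $|s|\le R$ and $|s|>R$, the former contributing only discrete spectrum since $\overline{[-R,R]\times(a_1,a_2)}$ is compact), this yields $\inf\sigma_\mathrm{ess}(H)\ge\inf_{|s|>R}\lambda(s)\ge\lambda_1-\eps$, and letting $\eps\to0$ gives the claim.

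The heart of the matter is therefore the convergence $\lim_{|s|\to\infty}\lambda(s)=\lambda_1$, where $\lambda(s)$ is the lowest eigenvalue of $L_s$ (equivalently the Rayleigh quotient~\eqref{lambda}) and $\lambda_1$ is the lowest eigenvalue of $L_\infty$ (the Rayleigh quotient~\eqref{lambda1.bis}). The natural tool is the asymptotic decoupling~\eqref{decoupling}: $f(s,t)/f_\infty(s,t)\to1$ as $|s|\to\infty$ for every non-zero $t$, with $f_\infty(s,t)=|\theta'(s)|\,|t|$. For the \textbf{upper bound} on $\limsup\lambda(s)$, I would take a quasi-mode: given $\delta>0$, pick $\psi\in C_0^\infty((a_1,a_2))$ with Rayleigh quotient~\eqref{lambda1.bis} within $\delta$ of $\lambda_1$; when $a_1a_2<0$ one must be a little careful because the optimiser for $L_\infty$ may be discontinuous at $0$ (it satisfies the Neumann-decoupled problem~\eqref{Neumann}), but one can still approximate it by smooth compactly supported functions vanishing near $0$ in the ``wrong'' subinterval. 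Plugging $\psi$ into~\eqref{lambda} and using $f(s,t)=|\theta'(s)|\,|t|\,(1+o(1))$ uniformly on $\supp\psi$ (which avoids $t=0$), the factors $|\theta'(s)|$ cancel between numerator and denominator and one is left with the $L_\infty$-quotient of $\psi$ plus an error tending to $0$; hence $\limsup_{|s|\to\infty}\lambda(s)\le\lambda_1+\delta$. For the \textbf{lower bound} on $\liminf\lambda(s)$, one rewrites $f(s,t)^2=\theta'(s)^2 t^2+1\ge\theta'(s)^2 t^2$, which gives pointwise $f(s,t)\ge|\theta'(s)|\,|t|=f_\infty(s,t)$; but this monotone comparison of weights goes the wrong way for both the kinetic term and the mass term, so one cannot simply conclude. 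Instead I would argue by scaling: substituting $f^2=\theta'(s)^2(t^2+\theta'(s)^{-2})$ shows $L_s$ is, for fixed large $|s|$, of the form (up to the overall positive constant $\theta'(s)^2$, which does not affect the Rayleigh quotient because it appears in both integrals in~\eqref{lambda})
$$
  -\big(t^2+\eta^2\big)^{-1/2}\,\frac{\der}{\der t}\,\big(t^2+\eta^2\big)^{1/2}\,\frac{\der}{\der t}
  \qquad\text{with}\qquad \eta=\eta(s):=|\theta'(s)|^{-1}\to0\,,
$$
wait --- rather, one should keep the quotient~\eqref{lambda} and observe that it equals $\int|\psi'|^2\sqrt{t^2+\eta^2}/\int|\psi|^2\sqrt{t^2+\eta^2}$, a Rayleigh quotient with weight $\sqrt{t^2+\eta^2}$ that converges pointwise and monotonically (decreasingly in $\eta$) to $|t|$; standard weighted-eigenvalue perturbation / $\Gamma$-convergence arguments then give $\lambda(s)\to\lambda_1$. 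To make this rigorous and avoid heavy machinery, I expect the cleanest route is a min-max / test-function argument in both directions: the upper bound as above, and for the lower bound, take a normalised ground state $\psi_s$ of $L_s$, show via~\eqref{lambda} and a uniform bound $\int|\psi_s'|^2\sqrt{t^2+\eta^2}\le C$ that $\{\psi_s\}$ is precompact in a suitable space, extract a limit $\psi$, and pass to the limit in the Rayleigh quotient using dominated convergence (the weights $\sqrt{t^2+\eta^2}$ are uniformly bounded above and below away from $t=0$, and near $t=0$ one controls the contribution since $\sqrt{t^2+\eta^2}\le|t|+\eta$ and $\psi_s$ is bounded), concluding $\liminf\lambda(s)\ge[\psi$'s $L_\infty$-quotient$]\ge\lambda_1$.

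The main obstacle is precisely this last compactness-and-limit step: the weight $f(s,t)$ degenerates at $t=0$ in the limit (the weight $|t|$ of $L_\infty$ vanishes there), so the limiting transverse problem on $(a_1,a_2)$ with $a_1a_2<0$ genuinely decouples across $t=0$ and the convergence of ground states is not uniform near the origin. One must show that no spectral mass ``leaks'' through $t=0$ — quantitatively, that a ground state of $L_s$ concentrated near $0$ would have large Rayleigh quotient, forcing $\psi_s\to0$ near $0$ in the relevant sense, consistent with the Neumann condition at $0$ in~\eqref{Neumann}. A convenient way to handle this is to bound $L_s$ from below near $t=0$ by a one-dimensional Dirichlet Laplacian on a small interval around $0$ (using $f\ge1$ there, which is the one place the monotone bound helps) and combine with the contributions away from $0$; this isolates the delicate region and lets the decoupling~\eqref{decoupling} do its work on the rest. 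Once $\lambda(s)\to\lambda_1$ is in hand, the bracketing argument sketched in the first paragraph delivers $\inf\sigma_\mathrm{ess}(H)\ge\lambda_1$ with no further difficulty.
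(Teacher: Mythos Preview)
Your overall plan coincides with the paper's: Neumann bracketing at $|s|=s_0$ reduces the question to $\liminf_{|s|\to\infty}\lambda(s)\ge\lambda_1$, and this is then proved by a compactness argument on the ground states of the transverse operators~$L_s$. Your rewriting of the Rayleigh quotient~\eqref{lambda} with weight $\sqrt{t^2+\eta^2}$, $\eta=|\theta'(s)|^{-1}$, is the paper's device in disguise. (The paragraph on the \emph{upper} bound $\limsup\lambda(s)\le\lambda_1$ is not needed for this lemma; only the $\liminf$ inequality matters.)

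Where your sketch is loosest is the phrase ``$\psi_s$ is bounded'' used to control the mass near $t=0$ and guarantee the extracted limit is non-trivial. With your normalisation $\int|\psi_s|^2\sqrt{t^2+\eta^2}=1$, neither $\|\psi_s\|_{L^2}$ nor $\|\psi_s\|_{L^\infty}$ is obviously uniformly bounded (only $\eta\int|\psi_s|^2\le1$ follows directly), so the inequality $\sqrt{t^2+\eta^2}\le|t|+\eta$ does not yet yield $\int|\psi_s|^2|t|\to1$. The paper handles precisely this point by running \emph{two} scales simultaneously: the ground state $\psi_s$ normalised in $\Hilbert_s$, which is bounded in the standard $W^{1,2}((a_1,a_2))$ because $f\ge1$, and the rescaled $\phi_s:=|\theta'(s)|^{1/2}\psi_s$, bounded in $\Dom(\ell_\infty)$. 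A short cut-off argument on $(a_1,a_2)\setminus[-\eps_j,\eps_j]$ with $\eps_j:=|\theta'(s_j)|^{-1/2}$ shows that the strong $L^2$-limit $\psi_\infty$ of $\psi_{s_j}$ is zero; feeding this back into $\|\psi_{s_j}\|_{\Hilbert_{s_j}}=1$ forces the $\Hilbert_\infty$-limit $\phi_\infty$ of $\phi_{s_j}$ to satisfy $\|\phi_\infty\|_{\Hilbert_\infty}\ge1$, and passing to the limit in the weak eigenvalue equation yields $L_\infty\phi_\infty=\lambda_\infty\phi_\infty$, hence $\lambda_\infty\ge\lambda_1$. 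In your variables this is exactly: introduce $\tilde\psi_s:=\eta^{1/2}\psi_s$, show $\tilde\psi_s\to0$ in $L^2$, and deduce that your $\psi_s$ converges to a unit vector in $\Hilbert_\infty$. Your alternative suggestion (a Dirichlet lower bound near $t=0$) is a plausible substitute but is not what the paper does and would need more detail to be a proof.
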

\begin{proof}
By imposing an extra Neumann condition at the segments $\{s=\pm s_0\}$ 
with $s_0>0$,
we obtain
$$
  H \ge H_\mathrm{int}^N \oplus H_\mathrm{ext}^N
  \,,
$$
where~$H_\mathrm{int}^N$ is the self-adjoint operator in 
$
  \Hilbert_\mathrm{int} :=
  \sii\big((-s_0,s_0)\times(a_1,a_2),f(s,t)\,\der s \, \der t\big)
$
associated with the closure~$h_\mathrm{int}^N$ of the form
$$
  \dot{h}_\mathrm{int}^N[\Psi] 
  := \big(\partial_i\Psi,G^{ij}\partial_j\Psi\big)_{\Hilbert_\mathrm{int}}
  \,, \qquad
  \Dom(\dot{h}_\mathrm{int}^N) := 
  C_0^\infty\big(\Real\times(a_1,a_2)\big)
  \upharpoonright (-s_0,s_0)\times(a_1,a_2)
  \,,
$$
and~$H_\mathrm{ext}^N$ is an analogously defined operator in
$
  \Hilbert_\mathrm{ext} :=
  \sii\big(\Real\setminus[-s_0,s_0]\times(a_1,a_2),f(s,t)\,\der s \, \der t\big)
$.
Since the spectrum of~$H_\mathrm{int}^N$ is purely discrete,
the minimax principle yields
\begin{equation}\label{brack1}
  \inf\sigma_\mathrm{ess}(H) 
  \ge \inf\sigma_\mathrm{ess}(H_\mathrm{ext}^N)  
  \ge \inf\sigma(H_\mathrm{ext}^N)
  \,.
\end{equation}
A lower bound analogous to~\eqref{l.bound} holds for~$H_\mathrm{ext}^N$ as well.
Consequently, neglecting the differential part in~\eqref{l.bound},
we obtain
\begin{equation}\label{brack2}
  \inf\sigma(H_\mathrm{ext}^N)
  \ge \inf_{|s| \ge s_0} \lambda(s)
  \,.
\end{equation}
This estimate together with~\eqref{brack1} yields
$$
  \inf\sigma(H)
  \ge \inf_{|s| \ge s_0} \lambda(s)
  \,.
$$
Since $s_0>0$ is arbitrary, 
the left-hand side is independent of~$s_0$
and the essential spectrum is a closed set, 
it follows that it is enough to show that
(recall~\eqref{lambda} and~\eqref{lambda1.bis})
\begin{equation}\label{crucial}
  \liminf_{|s| \to \infty} \lambda(s) \ge \lambda_1
\end{equation}
in order to conclude with the proof of the lemma.
 
To establish~\eqref{crucial},
since~$\lambda(s)$ (respectively, $\lambda_1$) is the lowest eigenvalue
of the operator~$L_s$ (respectively, $L_\infty$),
one is lead to establishing a sort of convergence of~$L_s$ to~$L_\infty$
as $|s| \to \infty$. For this, let~$\psi_s$ denote an eigenfunction of~$L_s$
corresponding to $\lambda(s) =: \lambda_s$, with $\psi_s$ normalised to~$1$ in~$\Hilbert_s$,
and let us write the eigenvalue equation $L_{s}\psi_{s}=\lambda_{s}\psi_{s}$
in the weak form
\begin{equation}\label{weak} 
  \ell_{s}(\varphi,\psi_{s}) 
  = \lambda_{s} \, (\varphi,\psi_{s})_{\Hilbert_{s}} 
  \,, \qquad
  \forall\varphi\in\Dom(\ell_{s})\,.
\end{equation}
First of all, we remark that $\lambda_s$ is uniformly bounded in $s \in \Real$
due to~\eqref{upper}. 
This can be verified by using a test function in~\eqref{lambda}
supported away from zero (it is only needed if $a_1 a_2 \le 0$)
and estimating $f(s,t) \ge |\theta'(s)|\,|t|$ in the denominator
and $f(s,t) \le 2 |\theta'(s)|\,|t|$ in the numerator
(the second bound is valid for all~$s$ sufficiently large
due to~\eqref{diverge}),
so that the $s$-dependent terms $|\theta'(s)|$ cancel in
the Rayleigh quotient. 
Consequently, putting $\varphi := \psi_{s}$ in~\eqref{weak},
we obtain  
$$ 
  \|\psi_{s}'\|_{\Hilbert_{s}}^2 \le C
  \,,
$$
where~$C$ is a constant independent of~$s$. 
Recalling the definition of~$f$ in~\eqref{metric}, we deduce
\begin{equation}\label{pre1} 
  \int_{a_1}^{a_2} |\psi_{s}'(t)|^2 \, \der t \le C
  \qquad \mbox{and} \qquad
  |\theta'(s)| \int_{a_1}^{a_2} |\psi_{s}'(t)|^2 \, |t| \, \der t \le C
  \,.
\end{equation}
Similarly, from the normalisation of~$\psi_s$, we deduce
\begin{equation}\label{pre2}  
  \int_{a_1}^{a_2} |\psi_{s}(t)|^2 \, \der t \le 1
  \qquad \mbox{and} \qquad
  |\theta'(s)| \int_{a_1}^{a_2} |\psi_{s}(t)|^2 \, |t| \, \der t \le 1
  \,.
\end{equation}
We may conclude from the latter inequalities in~\eqref{pre1} and~\eqref{pre2}
that $\{\phi_{s}\}_{s \in \Real}$ with 
$\phi_s(t) := |\theta'(s)|^{1/2} \psi_s(t)$
is a bounded family of functions in 
the weighted Sobolev space $\Dom(\ell_\infty)$,
while $\{\psi_{s}\}_{s \in \Real}$ converges to zero in~$\Hilbert_\infty$
as $|s| \to \infty$ due to~\eqref{diverge}.
At the same time, from the former inequalities in~\eqref{pre1} and~\eqref{pre2}
we deduce that $\{\psi_{s}\}_{s \in \Real}$ 
is a bounded family of functions in 
the standard Sobolev space~$W^{1,2}\big((a_1,a_2)\big)$.

Consequently, the sets
$\{\psi_{s}\}_{s \in \Real}$ and $\{\phi_{s}\}_{s \in \Real}$
are precompact in the weak topology of the spaces
$W^{1,2}\big((a_1,a_2)\big)$ and $\Dom(\ell_\infty)$, respectively.
Let~$\psi_\infty$ and~$\phi_\infty$ denote weak limit points,
\ie~there exists a real sequence $\{s_j\}_{j=1}^\infty$ 
such that $|s_j| \to \infty$ as $j \to \infty$ 
for which 
\begin{equation}\label{limits}
\begin{aligned}
  \psi_{s_j} \xrightarrow[j\to\infty]{w} \psi_\infty
  & \quad \mbox{in} \quad W^{1,2}\big((a_1,a_2)\big) \,,
  & \qquad 
  \phi_{s_j} \xrightarrow[j\to\infty]{w} \phi_\infty
  & \quad \mbox{in} \quad \Dom(\ell_\infty) \,,
  \\
  \psi_{s_j} \xrightarrow[j\to\infty]{s} \psi_\infty
  & \quad \mbox{in} \quad \sii\big((a_1,a_2)\big) \,,
  &
  \phi_{s_j} \xrightarrow[j\to\infty]{s} \phi_\infty
  & \quad \mbox{in} \quad \Hilbert_\infty \,.
\end{aligned}
\end{equation}
Here the strong limits follow from the compactness of the embeddings
$W^{1,2}\big((a_1,a_2)\big) \hookrightarrow \sii\big((a_1,a_2)\big)$
and $\Dom(\ell_\infty) \hookrightarrow \Hilbert_\infty$.
We claim that 
$$
  \psi_\infty = 0 \qquad \mbox{in} \qquad \sii\big((a_1,a_2)\big).
$$
By virtue of the latter inequality in~\eqref{pre2},
it is clear for the case $a_1 a_2 > 0$ when we can bound $|t| \ge r_1 > 0$.
If $a_1 a_2 \le 0$, we deduce the claim as follows.
On the one hand, for every sequence of positive numbers
$\{\eps_j\}_{j=1}^\infty$ such that $\eps_j \to 0$ as $j \to \infty$,
we have the convergence result 
$$
  \int_{(a_1,a_2) \setminus [-\eps_j,\eps_j]} |\psi_{s_j}(t)|^2 \, \der t
  \ \xrightarrow[j\to\infty]{} \
  \int_{(a_1,a_2)} |\psi_{\infty}(t)|^2 \, \der t
$$
by the monotone convergence theorem.
On the other hand, we have
$$
  \int_{(a_1,a_2) \setminus [-\eps_j,\eps_j]} |\psi_{s_j}(t)|^2 \, \der t
  \le \frac{1}{\eps_j}
  \int_{(a_1,a_2) \setminus [-\eps_j,\eps_j]} |\psi_{s_j}(t)|^2 \, |t| \, \der t
  \le \frac{1}{\eps_j \, |\theta'(s_j)|}
  \int_{(a_1,a_2)} |\phi_{s_j}(t)|^2 \, |t| \, \der t
  \,,
$$
where the right-hand side tends to zero as $j \to \infty$
provided that we choose for instance $\eps_j := |\theta'(s_j)|^{-1/2}$.
Here we use the hypothesis~\eqref{diverge}.
The fact that $\psi_\infty = 0$ implies $\phi_\infty \not= 0$.
Indeed, recalling the definition of~$f$ in~\eqref{metric}, 
we have
$$
  1 = \|\psi_{s_j}\|_{\Hilbert_{s_j}}^2
  \le \int_{a_1}^{a_2} |\psi_{s_j}(t)|^2 \, \der t
  + \int_{a_1}^{a_2} |\phi_{s_j}(t)|^2 \, |t| \, \der t
  \xrightarrow[j\to\infty]{} 
  \int_{a_1}^{a_2} |\phi_{\infty}(t)|^2 \, |t| \, \der t
  \,.
$$

In the eigenvalue equation~\eqref{weak}, 
let us take $\varphi \in C_0^\infty\big((a_1,a_2)\big)$,
the core of both~$\ell_s$ and~$\ell_\infty$,
write~$s_j$ instead of~$s$, multiply by $|\theta'(s_j)|^{1/2}$
and take the limit $j \to \infty$. 
Using~\eqref{limits}, we obtain
\begin{equation}\label{passing}
  \ell_\infty(\varphi,\phi_\infty) 
  = \lambda_\infty \,
  (\varphi,\phi_\infty)_{\Hilbert_\infty} 
  \qquad \mbox{with} \qquad
  \lambda_\infty := \lim_{j\to\infty} \lambda_{s_j}
  \,.
\end{equation}
Since~$\phi_\infty$ is non-zero, it enables us to conclude
that the eigenvalue~$\lambda_{s_j}$ of $L_{s_j}$ 
tends to an eigenvalue of~$L_\infty$ as $j \to \infty$
with eigenfunction~$\phi_\infty$.  
Consequently, $\lambda_\infty \ge \lambda_1$.
Since the same result~\eqref{passing} is obtained 
for \emph{any} weak limit point of $\{\phi_{s}\}_{s \in \Real}$,
we arrive at the desired bound~\eqref{crucial}.

Summing up, combining \eqref{crucial} with~\eqref{brack1}
and~\eqref{brack2} concludes the proof of the lemma.
\end{proof}

Now we turn to the opposite inclusion 
$\sigma_\mathrm{ess}(H) \supset [\lambda_1,\infty)$. 
Employing~\eqref{decoupling} in~\eqref{LB},
it is expected that the essential spectrum of~$H$
will be determined by the asymptotic operator
\begin{equation}\label{op.as}
  H_\infty := 
  - \frac{|\theta'(s)|^{-1} \partial_s |\theta'(s)|^{-1}   \partial_s}{t^2}
  - |t|^{-1} \partial_t |t| \partial_t
  \qquad \mbox{in} \qquad
  \sii\big(\Real\times(a_1,a_2), |\theta'(s)| \, |t| \, \der s \, \der t\big) 
  \,,
\end{equation}
for which we can assume that $\theta'(s) \not= 0$ for all $s \in \Real$
by using~\eqref{diverge} and possibly re-defining~$\theta'$
on a compact set (the latter does not influence the essential spectrum). 
More specifically, $H_\infty$ is the operator associated 
with the closure~$h_\infty$ of the form
$$
  \dot{h}_\infty[\Psi] := 
  \int  f_\infty^{-1} \, |\partial_s \Psi|^2
  + \int f_\infty \, |\partial_t \Psi|^2  
  \,, \qquad
  \Dom(\dot{h}_\infty) 
  := C_0^\infty\big(\Real\times [(a_1,a_2) \setminus \{0\}] \big)
  \,,
$$
with the function~$f_\infty$ defined in~\eqref{decoupling}.
Notice that the one-dimensional operator 
$-|\theta'(s)|^{-1} \partial_s |\theta'(s)|^{-1}\partial_s$ 
in $\sii\big(\Real,|\theta'(s)|\,\der s\big)$,
understood as the Friedrichs extension of the operator
initially defined on the domain $C_0^\infty(\Real)$,
is unitarily equivalent to the standard Laplacian $-\partial_u^2$ 
in $\sii(\Real)$ with the usual domain $W^{2,2}(\Real)$;
indeed, the unitarily equivalence is accomplished 
by the change of variables $u = \int_0^s|\theta'(\sigma)| \, \der\sigma$.
Since the operator $-\partial_u^2$ can be decomposed 
using the Fourier transform, $H_\infty$~is unitarily equivalent to the direct-integral operator
\begin{equation}\label{fiber} 
  H_\infty \cong \int^\oplus_{\Real} 
  L_\infty^m \ \der m
  \qquad \mbox{with} \qquad
  L_\infty^m := 
  - \frac{1}{|t|} \, \frac{\der}{\der t} \, |t| \, \frac{\der}{\der t} 
  + \frac{m^2}{t^2}
  \,.
\end{equation}
More specifically, $L_\infty^m$ is the operator 
in $\sii\big((a_1,a_2),|t|\,\der t\big)$
associated with the closure $\ell_\infty^m$ of the form
$$
  \dot{\ell}_\infty^m[\psi] 
  := \int_{a_1}^{a_2} |\psi'(t)|^2 \, |t|\,\der t
  \,, \qquad 
  \Dom\big(\dot{\ell}_\infty^m\big) 
  := C_0^\infty\big((a_1,a_2)\setminus\{0\}\big)
  \,.
$$
Let us denote by $\{\lambda_k^m\}_{k=1}^\infty$ 
and $\{\psi_k^m\}_{k=1}^\infty$
the set of eigenvalues of~$L_\infty^m$
and corresponding eigenfunctions, respectively,
with $\psi_k^m$ normalised to~$1$
in $\sii\big((a_1,a_2),|t|\,\der t\big)$.
Recalling~\eqref{L.infinity}, 
notice that $\{\lambda_k^0\}_{k=1}^\infty = \{\lambda_k\}_{k=1}^\infty$
and that $\psi_k^0$ can be chosen in such a way that
$\{\psi_k^0\}_{k=1}^\infty = \{\psi_k\}_{k=1}^\infty$.
It follows from~\eqref{fiber} that
$$
  \sigma(H_\infty) 
  = \sigma_\mathrm{ess}(H_\infty) 
  = [\lambda_1,\infty)
  \,.
$$
This is an \emph{a posteriori} evidence supporting \eqref{ess}. 
Here we use~$H_\infty$ as an inspiration to choose 
an appropriate singular sequence to prove the following lemma
with help of a Weyl-type criterion:
\begin{Lemma}\label{Lem.ess1}
If~\eqref{diverge} holds, then
$
  \sigma_\mathrm{ess}(H) \supset [\lambda_1,\infty)
$.
\end{Lemma}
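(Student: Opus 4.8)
The plan is to construct, for each $\lambda \in [\lambda_1,\infty)$, an explicit Weyl singular sequence for $H$ at $\lambda$, modelled on the fibered asymptotic operator $H_\infty$ from~\eqref{fiber}. Write $\lambda = \lambda_1 + m^2$ for a suitable $m \ge 0$ (here I use that $\{\lambda_1^m\}_{m \ge 0}$, the lowest eigenvalues of the fibers $L_\infty^m$, sweep out all of $[\lambda_1,\infty)$ continuously and monotonically in~$m$, since $\lambda_1^m \ge \lambda_1 + m^2 \inf\{|t|^{-2}\}$ diverges and $\lambda_1^0 = \lambda_1$; more carefully, one picks $m$ with $\lambda_1^m = \lambda$). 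Let $\psi := \psi_1^m$ be the corresponding normalised eigenfunction of $L_\infty^m$ in $\sii((a_1,a_2),|t|\,\der t)$, which after the change of variables $u = \int_0^s |\theta'(\sigma)|\,\der\sigma$ suggests the ansatz $\Psi_n(s,t) := \varphi_n(u(s))\, \mathrm{e}^{\mathrm{i} m u(s)}\, \psi(t)$, where $\varphi_n$ is a standard real plateau cutoff in the $u$-variable (supported on $|u| \sim n$, of width $\sim n$, with $\|\varphi_n\|_{\sii} = 1$ and $\|\varphi_n'\|_{\sii} \to 0$). Because $|s| \to \infty$ forces $|u(s)| \to \infty$ (indeed $|u(s)| \ge |s| \cdot \inf_{|s|\ge s_0}|\theta'| \to \infty$ once $|\theta'|$ is large), these functions live at spatial infinity and are compactly supported, hence in $\Dom(h)$.

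The key steps, in order, are: (i) verify $\Psi_n \in \Dom(h)$ and that, after normalising, $\|\Psi_n\|_\Hilbert$ stays bounded below — here one notes $\|\Psi_n\|_\Hilbert^2 = \int |\varphi_n(u(s))|^2\, |\psi(t)|^2\, f(s,t)\,\der s\,\der t$ and uses~\eqref{decoupling}, $f(s,t)/f_\infty(s,t) \to 1$ uniformly on $\supp\psi$ (which is bounded away from $0$ when $a_1 a_2 > 0$; when $a_1 a_2 \le 0$ one chooses $\psi = \psi_1^m$ vanishing near $0$, possible since $\Dom(\dot\ell_\infty^m)$ excludes $t=0$), so that $\|\Psi_n\|_\Hilbert^2 = (1+o(1)) \int |\varphi_n|^2\, \der u$; (ii) show $\|(H-\lambda)\Psi_n\|_\Hilbert \to 0$: expand $(H-\lambda)\Psi_n$ using the expression~\eqref{LB} for $H$, split the error into a ``cutoff'' part (terms where a derivative hits $\varphi_n$, which carry a factor $\varphi_n'$ or $\varphi_n''$ and vanish by the choice of plateau) and an ``asymptotic mismatch'' part (the difference between $H$ acting with $f$ and $H_\infty$ acting with $f_\infty$, which involves $f/f_\infty - 1$ and its $s,t$-derivatives and is $o(1)$ uniformly on $\supp\Psi_n$ by~\eqref{diverge} and~\eqref{decoupling}), together with the exact cancellation $L_\infty^m \psi = \lambda\psi$ and $(-\partial_u^2)(\varphi_n \mathrm{e}^{\mathrm{i} m u}) = (m^2 \varphi_n - 2\mathrm{i} m\varphi_n' - \varphi_n'')\mathrm{e}^{\mathrm{i} m u}$; (iii) extract a subsequence converging weakly to $0$ in $\Hilbert$ (e.g. $\Psi_n$ has shrinking-to-infinity supports, or directly argue orthogonality to fixed compactly supported functions), so that no spurious strong limit spoils the Weyl criterion; (iv) conclude $\lambda \in \sigma_\mathrm{ess}(H)$ and, since $\lambda \in [\lambda_1,\infty)$ was arbitrary, obtain $\sigma_\mathrm{ess}(H) \supset [\lambda_1,\infty)$.

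The main obstacle I expect is controlling the ``asymptotic mismatch'' near $t=0$ in the case $a_1 a_2 \le 0$: the metric coefficient $f^{-1}$ multiplying $|\partial_s\Psi|^2$ behaves like $f_\infty^{-1} = (|\theta'(s)|\,|t|)^{-1}$, which blows up as $t \to 0$, so the ratio $f/f_\infty$ is \emph{not} uniformly close to $1$ on all of $(a_1,a_2)$ and the naive estimate fails on a neighbourhood of $t=0$. This is precisely why $\psi_1^m$ should be taken with support bounded away from $t=0$ — which is legitimate because the form domain of $\ell_\infty^m$ is $C_0^\infty((a_1,a_2)\setminus\{0\})$, reflecting the decoupling into the two disks — and then on $\supp\psi$ one genuinely has $f(s,t) = |\theta'(s)|\,|t|\sqrt{1 + (\theta'(s)|t|)^{-2}} = f_\infty(s,t)(1+o(1))$ with all required derivative bounds, uniformly as $|s|\to\infty$. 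A secondary technical point is that $u(s)$ need not be smooth (only $\theta$ is Lipschitz, so $|\theta'|$ is merely $L^\infty_{\mathrm{loc}}$), so $\varphi_n(u(s))$ should be handled at the level of the quadratic form and the weak eigenvalue equation rather than by pointwise differentiation — one writes $\partial_s[\varphi_n(u(s))] = \varphi_n'(u(s))|\theta'(s)|$ in the distributional sense, which is exactly the combination that appears weighted by $f^{-1} \approx f_\infty^{-1} = (|\theta'|\,|t|)^{-1}$ in $h$, keeping all expressions well-defined under the standing Lipschitz hypothesis on $\theta$.
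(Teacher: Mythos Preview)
Your approach matches the paper's almost exactly: the same Weyl-sequence ansatz $\varphi_n \cdot e^{imu} \cdot \psi_1^m$, the same splitting into a cutoff error and an $f$--$f_\infty$ mismatch, the same sweep of $[\lambda_1,\infty)$ via $m \mapsto \lambda_1^m$, and the same descent to the form level for the regularity reason you flag at the end (the paper proves $\|(H-\lambda_1^m)\Psi_n^m\|_{\Hilbert_1^*}\to 0$ in the dual norm rather than an operator-norm statement). The only cosmetic difference is that the paper places the cutoff $\varphi_n$ in the $s$-variable rather than in~$u$.

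There is, however, a genuine gap in your treatment of the $t\to 0$ issue when $a_1 a_2\le 0$. You assert that $\psi_1^m$ ``should be taken with support bounded away from $t=0$'', citing the form \emph{core} $C_0^\infty\big((a_1,a_2)\setminus\{0\}\big)$. But $\psi_1^m$ is the (essentially unique) lowest eigenfunction of $L_\infty^m$; it is not a free choice, and it is \emph{not} compactly supported away from~$0$---its support is a full interval $(a_1,0]$ or $[0,a_2)$. For $m=0$ it does not even vanish at~$0$: it satisfies a Neumann condition there, cf.~\eqref{Neumann}. Consequently your ``uniform $f/f_\infty\to 1$ on $\supp\psi$'' argument is unavailable as stated. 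The paper's fix is to restrict to $m>0$, for which $\psi_1^m(0)=0$ and the pointwise ratio $C_m:=\sup_t |\psi_1^m(t)|/|t|$ is finite; this constant is exactly what tames the integrals containing the singular weight $f_\infty^{-1}=(|\theta'|\,|t|)^{-1}$, without any need for compact support away from~$0$. The endpoint $\lambda_1=\lambda_1^0$ is then recovered by closedness of the essential spectrum. To repair your version you would need either to adopt the paper's $C_m$ device, or to insert an extra approximation layer replacing $\psi_1^m$ by nearby core functions and running a diagonal argument.
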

\begin{proof}
Let $\varphi_0 \in C_0^\infty(\Real)$ be a real-valued function
normalised to~$1$ in $\sii(\Real)$ with $\supp\varphi_0 \subset [0,1]$.
For every $n > 0$, we define 
$$
  \varphi_n(s):= n^{-1/2} \,\varphi_0(n^{-1}s-n)
  \,,
$$
so that 
\begin{equation}\label{support}
  \supp\varphi_n = n^2 + n \supp\varphi_0
  \subset [n^2,n^2+n]
  \subset [n^2,(n+1)^2]
\end{equation}
is ``localised at infinity'' for large~$n$.
The normalisation factor is chosen in such a way that 
also $\varphi_n$ is normalised to~$1$ in $\sii(\Real)$.
 
For every $m \in \Real$, we set 
\begin{equation}\label{test}
  \Psi_n^m(s,t) := \varphi_n(s) \, \omega^m(s) \, \psi_1^m(t)
  \qquad \mbox{with} \qquad
  \omega^m(s) :=  e^{i m \int_0^s |\theta'(\sigma)| \, \der \sigma}
  \,.
\end{equation}
(For simplicity, 
below we shall denote by the same symbols 
$\varphi_n$, $\omega^m$ and~$\psi_1^m$ 
functions $\varphi_n \otimes 1$, $\omega^m \otimes 1$
and $1 \otimes \psi_1^m$ on $\Real\times(a_1,a_2)$,
respectively.)
For every $n > 0$ and $m \in \Real$, 
we have $\Psi_n^m\in \Dom(h)$.
Using the pointwise inequality $f \ge 1$ 
and the normalisation of~$\varphi_n$, 
we get
\begin{equation}\label{est0}
  \|\Psi_n^m\|_\Hilbert
  \ge \|\varphi_n\|_{\sii(\Real)} \, \|\psi_1^m\|_{\sii((a_1,a_2))}
  = \|\psi_1^m\|_{\sii((a_1,a_2))} > 0 
  \,.
\end{equation}
Thus we have a positive lower bound to the norm of~$\Psi_n^m$
which is independent of~$n$.
Our goal is to show that, for every $m \in \Real$,
\begin{equation}\label{Weyl.adapted}
  \big\|(H-\lambda_1^m)\Psi_{n}^m\big\|_{\Hilbert_1^*} 
  := \sup_{\stackrel[\phi\not=0]{}{\phi \in C_0^\infty(\Real\times(a_1,a_2))}}
  \frac{| h(\phi,\Psi_{n}^m) 
  - \lambda_1^m \, (\phi,\Psi_{n}^m)_\Hilbert| }
  {\|\phi\|_{\Hilbert_1}}
  \ \xrightarrow[n \to \infty]{} \ 0
  \,,
\end{equation}
where $\|\cdot\|_{\Hilbert_1}$ is the norm introduced in~\eqref{h-norm}.
Since $\Real \ni m \mapsto \lambda_1^m \in [\lambda_1,\infty)$ 
is a continuous function with range $[\lambda_1,\infty)$, 
property~\eqref{Weyl.adapted} ensures that 
$
  [\lambda_1,\infty) \subset \sigma_\mathrm{ess}(H)
$
with help of the Weyl criterion adapted 
to quadratic forms (\cf~\cite[Thm.~5]{KL}). 
Our inspiration for the choice~\eqref{test}
is the \emph{formal} identity
$$
  (H_\infty-\lambda_1^m) \, \omega^m \, \psi_1^m
  = 0
  \,,
$$
which is behind the fact that an analogue of~\eqref{Weyl.adapted}	
holds for~$H_\infty$ instead of~$H$. 

Since $\Real \ni m \mapsto \lambda_1^m \in [\lambda_1,\infty)$ is even
and the essential spectrum is a closed set,
it is enough to show~\eqref{Weyl.adapted} for $m > 0$.  
Note that $[0,\infty) \ni m \mapsto \lambda_1^m \in [\lambda_1,\infty)$
is strictly increasing.
Let us henceforth assume $m > 0$ and write
\begin{equation}\label{split}
  h(\phi,\Psi_{n}^m) - \lambda_1^m \, (\phi,\Psi_{n}^m)_\Hilbert
  = I_1 + I_2 
  \,,
\end{equation}
where
$$
\begin{aligned} 
  I_1 
  &:= \int f_\infty^{-1} \, \overline{\partial_s\phi} \, \partial_s \Psi_n^m 
  + \int f_\infty \, \overline{\partial_t\phi} \, \partial_t \Psi_n^m
  - \lambda_1^m 
  \int f_\infty	 \, \overline{\phi} \, \Psi_n^m 
  \,,
  \\
  I_2 
  &:= \int (f^{-1} - f_\infty^{-1}) \, 
  \overline{\partial_s\phi} \, \partial_s \Psi_n^m 
  + \int (f- f_\infty)\, 
  \overline{\partial_t\phi} \, \partial_t \Psi_n^m
  - \lambda_1^m 
  \int (f - f_\infty)  \, 
  \overline{\phi} \, \Psi_n^m
  \,.
\end{aligned}
$$
If $a_1 a_2 \le 0$, notice that $\psi_k^m(0) = 0$ whenever $m \not= 0$.
Consequently,
\begin{equation}\label{m.positive}
  C_m := \sup_{t \in (a_1,a_2)} \frac{|\psi_1^m(t)|}{|t|}  
\end{equation}
is a finite constant and the integrals above containing~$f_\infty^{-1}$
are well defined.

Integrating by parts and using the eigenvalue equation
that~$\psi_1^m$ satisfies, it is easy to check that
$$
  I_1 = \int f_\infty^{-1} \, \overline{\partial_s\phi}
  \, \varphi_n' \, \omega^m \, \psi_1^m 
  - i m \int |\theta'| \, f_\infty^{-1} \, \overline{\phi}
  \, \varphi_n' \, \omega^m \, \psi_1^m 
  \,.
$$
Consequently,  
$$
\begin{aligned} 
  |I_1| 
  &\le \sqrt{\int f^{-1} \, |\partial_s\phi|^2}
  \sqrt{\int f \, f_\infty^{-2} \, |\varphi_n'|^2 |\psi_1^m|^2}
  + |m| \sqrt{\int f \, |\phi|^2}
  \sqrt{\int |\theta'|^2 \, f^{-1} \, f_\infty^{-2} \, |\varphi_n'|^2 |\psi_1^m|^2}
  \\
  &\le
  \|\phi\|_{\Hilbert_1} \, C_m \, \left(
  \sqrt{\int f \, |\theta'|^{-2} \, |\varphi_n'|^2 }
  + |m| \sqrt{\int f^{-1}  \, |\varphi_n'|^2}
  \right)
  \\
  &\le
  \|\phi\|_{\Hilbert_1} \, C_m \, (1+|m|) \, 
  \|\varphi_n'\|_{\sii(\Real)}
  \,,
\end{aligned}
$$
where the first estimate follows by the Schwarz inequality,
the second bound is due to~\eqref{m.positive}
and the last inequality employs~\eqref{diverge}
and the fact that~$\varphi_n$ is ``localised at infinity''.
Since
$$
  \|\varphi_n'\|_{\sii(\Real)} 
  = n^{-1} \, \|\varphi_0'\|_{\sii(\Real)}
  \,,
$$
we infer that the $I_1$-part of~\eqref{split} verifies~\eqref{Weyl.adapted}.

Now we turn to estimating the individual terms of~$I_2$. 
Similarly as above, 
noticing the identity $f - f_\infty = 1/(f + f_\infty)$
and recalling that~$\varphi_n$ is real-valued,
we estimate
$$
\begin{aligned} 
  \left|
  \int (f^{-1} - f_\infty^{-1}) \, 
  \overline{\partial_s\phi} \, \partial_s \Psi_n^m 
  \right|
  &\le \sqrt{\int f^{-1} \, |\partial_s\phi|^2}
  \sqrt{\int \frac{(|\varphi_n'|^2+m^2 |\theta'|^2 |\varphi_n|^2) |\psi_1^m|^2}
  {f \, f_\infty^2 \, (f_\infty+f)^2}}
  \\
  &\le
  \|\phi\|_{\Hilbert_1} \, C_m \,
  \sqrt{\int \frac{(|\varphi_n'|^2+m^2 |\theta'|^2 |\varphi_n|^2)}
  {f \, |\theta'|^2 \, (f_\infty+f)^2}}
  \,,
\end{aligned}
$$
where the square root on the second line tends to zero as $n\to\infty$,
due to~\eqref{diverge} with help of the dominated convergence theorem. 
At the same time,
$$
\begin{aligned} 
  \left|
  \int (f - f_\infty) \, 
  \overline{\partial_t\phi} \, \partial_t \Psi_n^m 
  \right|
  &\le \sqrt{\int f \, |\partial_t\phi|^2}
  \sqrt{\int \frac{|\varphi_n|^2 \, |(\psi_1^m)'|^2}
  {f \, (f_\infty+f)}}
  \\
  &\le
  \|\phi\|_{\Hilbert_1} \, 
  \left(\sup_{t\in(a_1,a_2)} |(\psi_1^m)'(t)|^2\right) \,
  \sqrt{\int \frac{|\varphi_n|^2}
  {f \, (f_\infty+f)}}
  \,,
\end{aligned}
$$
where the square root on the second line again tends to zero as $n\to\infty$.
Finally,
$$
\begin{aligned} 
  \left|
  \int (f - f_\infty) \, 
  \overline{\phi} \, \Psi_n^m 
  \right|
  &\le \sqrt{\int f \, |\phi|^2}
  \sqrt{\int \frac{|\varphi_n|^2 \, |{\psi_1^m}|^2}
  {f \, (f_\infty+f)}}
  \\
  &\le
  \|\phi\|_{\Hilbert_1} \, 
  \left(\sup_{t\in(a_1,a_2)} |{\psi_1^m}(t)|^2\right) \,
  \sqrt{\int \frac{|\varphi_n|^2}
  {f \, (f_\infty+f)}}
  \,,
\end{aligned}
$$
where the square root on the second line also tends to zero as $n\to\infty$.
Summing up, we infer that the $I_2$-part of~\eqref{split} also
verifies~\eqref{Weyl.adapted}.
\end{proof}

Theorem~\ref{Thm.ess} follows as a consequence of 
Lemmata~\ref{Lem.ess2} and~\ref{Lem.ess1}.

\begin{Remark}\label{Rem.as.op}
Coming back to the unitarily equivalent operator~$\hat{H}$
from Remark~\ref{Rem.unitary}, 
let us assume in addition to~\eqref{diverge} that 
\begin{equation}\label{diverge.extra} 
  \lim_{|s| \to \infty} \frac{\theta''(s)}{\theta'(s)^2} = 0
  \qquad \mbox{and} \qquad
  \lim_{|s| \to \infty} \frac{\theta'''(s)}{\theta'(s)^3} = 0 
  \,.
\end{equation}
Then
\begin{equation}\label{unitary.limits}
  V(s,t) \xrightarrow[|s| \to \infty]{} -\frac{1}{4t^2}
\end{equation}
for all $t \in (a_1,a_2) \setminus \{0\}$ 
(that is, for all $t \in (a_1,a_2)$ if $a_1 a_2 > 0$).
Hence the natural asymptotic counterpart of~$\hat{H}$ reads
$$
  \hat{H}_\infty := 
  - \frac{\partial_s |\theta'(s)|^{-2} \partial_s}{t^2}
  -\partial_t^2-\frac{1}{4t^2}
  \qquad \mbox{in} \qquad
  \sii\big(\Real\times(a_1,a_2)\big) 
  \,.
$$

Let us also remark that the mean curvature of 
the tube~$\Omega$ (when regarded as a submanifold of~$\Real^3$)
is given by
$$
  M(s,t) = 
  - \frac{\theta''(s) \, t}{\big[1+\theta'(s)^2\,t^2\big]^{3/2}}
  \,.
$$
Hence, assuming~\eqref{diverge} 
and the first condition of~\eqref{diverge.extra}, 
both the Gauss and mean curvatures of~$\Omega$
vanish at infinity for all $t\ne0$
(and even uniformly in $t$ if $a_1 a_2 > 0$).
But despite of this ``asymptotic flatness'',
the essential spectrum substantially differs
from the standard situation~\eqref{vanish},
see Theorem~\ref{Thm.ess}.
\end{Remark}
%

\section{The discrete spectrum}\label{Sec.disc}

The following result shows that there is always \emph{some} spectrum of~$H$ 
below~$\lambda_1$ provided that the cross-section $(a_1,a_2)$ 
is twisted with respect to a point \emph{inside} the interval
(irrespectively whether~\eqref{diverge} holds or not). 
\begin{Theorem}\label{Thm.disc}
If $a_1 a_2 \le 0$, then
$$
  \inf\sigma(H) < \lambda_1
  \,.
$$ 
\end{Theorem}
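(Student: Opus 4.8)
The plan is to exhibit a test function $\Psi \in \Dom(h)$ for which the Rayleigh quotient $h[\Psi]/\|\Psi\|_\Hilbert^2$ is strictly less than $\lambda_1$, so that the variational characterisation of $\inf\sigma(H)$ gives the claim. The natural ansatz is a product $\Psi(s,t) := \varphi(s)\,\psi_1(t)$, where $\psi_1$ is the lowest eigenfunction of $L_\infty$ normalised in $\Hilbert_\infty$ (satisfying the Neumann-decoupled problem~\eqref{Neumann}) and $\varphi \in C_0^\infty(\Real)$ is a longitudinal profile to be optimised. The idea is that, because $a_1 a_2 \le 0$, the function $\psi_1$ vanishes identically on the smaller of the two subintervals of $(a_1,a_2)\setminus\{0\}$ and in particular $\psi_1(0)=0$, so that the weight $t^2$ in $f(s,t)^2 = 1+\theta'(s)^2 t^2$ is ``switched off'' exactly where $\psi_1$ lives near $t=0$; this is what should produce a gain over $\lambda_1$.

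First I would compute $h[\Psi]$ explicitly. Using $f^{-1}\le 1$ one bounds the longitudinal term by $\|\varphi'\|_{\sii(\Real)}^2 \int \psi_1^2$, while the transverse term $\int f\,|\partial_t\Psi|^2 = \int |\varphi|^2 \int f(s,t)\,|\psi_1'(t)|^2\,\der t$ must be handled carefully: since $f(s,t) = \sqrt{1+\theta'(s)^2 t^2}$ depends on $s$, I would first take $\varphi$ supported where $|\theta'|$ is, say, as small as possible, OR — better, to keep the argument valid without~\eqref{diverge} as the statement demands — simply use $f(s,t) \le \sqrt{1+\theta'(s)^2 a^2}$ where $a:=\max\{|a_1|,|a_2|\}=r_2$ on a fixed compact support and compare against the denominator. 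The key algebraic point is the denominator: $\|\Psi\|_\Hilbert^2 = \int|\varphi|^2 \int f(s,t)\,|\psi_1(t)|^2\,\der t$, and here $f(s,t)\,\psi_1(t)^2 \ge \psi_1(t)^2$ pointwise, with \emph{strict} inequality on a set of positive measure as long as $\theta'\not\equiv 0$ and $\psi_1$ does not vanish a.e.\ away from the axis. Comparing numerator and denominator against the corresponding quantities for $L_\infty$ (whose Rayleigh quotient equals $\lambda_1$ for $\psi_1$), the strategy is to show that the transverse Rayleigh quotient $\int f\,|\psi_1'|^2 / \int f\,|\psi_1|^2$ is strictly below $\lambda_1$ for at least some $s$, exploiting that $f$ weights the numerator and denominator differently where $\psi_1'\approx 0$ (near $t=0$, by the Neumann condition $\psi_1'(0^\pm)=0$) versus where $\psi_1$ is largest.

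Concretely, the cleanest route: fix $s_0$ with $\theta'(s_0)\ne 0$ (possible by Lipschitz non-triviality; if $\theta'\equiv 0$ the surface is flat and $\lambda_1$ is not even the relevant threshold, but in that degenerate case $H$ reduces to the straight strip and one checks the statement is vacuous or handled trivially — I would simply assume $\theta$ non-constant, which is the only interesting case). Then take $\varphi$ concentrated near $s_0$ and let its width $\to\infty$ so that $\|\varphi'\|^2/\|\varphi\|^2 \to 0$; in the limit the Rayleigh quotient of $\Psi$ approaches $\int_{a_1}^{a_2} f(s_0,t)\,|\psi_1'(t)|^2\,\der t \big/ \int_{a_1}^{a_2} f(s_0,t)\,|\psi_1(t)|^2\,\der t$. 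It remains to verify this ratio is $<\lambda_1$. Since $\psi_1$ solves $-t^{-1}(t\psi_1')' = \lambda_1\psi_1$ with $\psi_1'(0^\pm)=0$, multiplying by $\psi_1\, w(t)$ for a suitable weight $w$ and integrating by parts, one gets for any nonnegative weight $w$ the identity $\int w\,|\psi_1'|^2 |t|\,\der t = \lambda_1\int w\,\psi_1^2|t|\,\der t - \int w'\,\psi_1\psi_1'\,|t|\,\der t$; choosing $w$ related to $f(s_0,\cdot)/|t|$ and using that $\psi_1\psi_1' \ge 0$ (as $\psi_1\ge 0$ is increasing toward its interior maximum, then the sign flips) would give the strict inequality after a short sign analysis.

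\medskip

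\textbf{Main obstacle.} The genuinely delicate step is establishing the \emph{strict} inequality in the transverse comparison — i.e.\ that replacing the weight $|t|$ by $f(s_0,t) = \sqrt{1+\theta'(s_0)^2 t^2}$ (which, unlike $|t|$, does not vanish at $t=0$) strictly lowers the ground-state Rayleigh quotient of the transverse problem. Intuitively this is because $f(s_0,\cdot)$ adds relative mass near $t=0$ where $\psi_1$ is ``flat'' ($\psi_1'$ small) and, in the case $a_1 a_2<0$, where $\psi_1$ may even be nonzero while contributing nothing to the Dirichlet energy there; so the denominator grows proportionally more than the numerator. Making this rigorous without the extra regularity of Remark~\ref{Rem.unitary}, purely at the level of the weighted forms $\ell_s$ and $\ell_\infty$, and ruling out the boundary case where $\psi_1$'s interior max sits exactly where the weight perturbation is neutral, is where the real work lies; I expect the authors handle it by an explicit first-variation (perturbation) computation in the parameter $\theta'(s_0)^2$ around $0$, showing the derivative of the ground-state eigenvalue is strictly negative.
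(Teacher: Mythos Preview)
Your overall strategy—product test function $\Psi_n=\varphi_n\psi_1$, let the longitudinal factor spread so that its derivative contribution vanishes, and show that the remaining transverse quantity is strictly below $\lambda_1$—is exactly the paper's. But several details are off.

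First, your assertion that $\psi_1(0)=0$ is wrong. When $a_1a_2<0$ the ground state $\psi_1$ of $L_\infty$ is identically zero on the \emph{smaller} subinterval and equals (a multiple of) $J_0(\sqrt{\lambda_1}\,|t|)$ on the larger one; in particular its one-sided limit at $t=0$ on the supporting side is $cJ_0(0)\neq 0$, and $\psi_1$ attains its maximum there, not at an interior point (see \eqref{Neumann}--\eqref{psi1.sym}). Your picture of $\psi_1$ ``increasing toward its interior maximum'' is therefore incorrect: on the supporting interval $\psi_1$ is monotone, with $\psi_1'(0^\pm)=0$.

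Second, ``concentrate near $s_0$ and let the width $\to\infty$'' is self-contradictory; once the support spreads you cannot freeze the transverse quotient at a single $s_0$. The paper simply takes $\varphi_n$ to be an explicit plateau function on $[-2n,2n]$ (equal to $1$ on $[-n,n]$) and observes that the transverse contribution has the right sign for \emph{every} $s$, so no localisation is needed at all.

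Third—and this is the main conceptual point—the paper does not use a perturbation in $\theta'(s_0)^2$. It derives the exact identity
\[
h[\Psi_n]-\lambda_1\|\Psi_n\|_\Hilbert^2
=\Big\|\tfrac{\varphi_n'\psi_1}{f}\Big\|_\Hilbert^2
+(\varphi_n\psi_1,W\varphi_n\psi_1')_\Hilbert,
\qquad W(s,t)=\frac{1}{t\,[1+\theta'(s)^2t^2]},
\]
by integrating by parts in~$t$ and using the eigenvalue equation for $\psi_1$. The first term is $O(n^{-1})$; the second is strictly negative for every $n$ because $W$ has the sign of $1/t$ while $\psi_1\psi_1'$ has the opposite sign (monotonicity of $\psi_1$ on each half). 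Your integration-by-parts identity with weight $w=f(s_0,\cdot)/|t|$ is in fact this same computation in disguise—one checks $-w'(t)\,|t|=f(s_0,t)\,W(s_0,t)$—so once you correct the description of $\psi_1$, the ``main obstacle'' you flag dissolves into a two-line sign check rather than a first-variation argument.
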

\begin{proof}
As in the proof of Lemma~\ref{Lem.ess1},
we consider a family of test functions of the form
\begin{equation}
  \Psi_n(s,t) := \varphi_n(s) \, \psi_1(t)
  \,, 
\end{equation}
where~$\psi_1$ is the eigenfunction of~$L_\infty$
corresponding to the lowest eigenvalue~$\lambda_1$
(case $m=0$ of~\eqref{test}),
but now the ``longitudinal'' component~$\varphi_n$ is given by
\begin{equation}
  \varphi_n(s) :=
\begin{cases}
  1 & \mbox{if} \quad |s| < n \,, 
  \\
  \displaystyle
  \frac{2n-|s|}{n} & \mbox{if} \quad n \le |s| \le 2n \,, 
  \\
  0 & \mbox{if} \quad |s| > 2n \,.
\end{cases} 
\end{equation}
Notice that~$\varphi_n$ converges pointwise to~$1$ as $n \to \infty$.

Integrating by parts and 
using the differential equation that~$\psi_1$ satisfies,
one verifies that
\begin{equation}\label{yields.bis}
  h[\Psi_n] - \lambda_1 \, \|\Psi_n\|_\Hilbert^2
  = \left\|\frac{\varphi_n' \psi_1}{f} \right\|_\Hilbert^2
  + \big(\varphi_n \psi_1,W\varphi_n \psi_1'\big)_\Hilbert
  \,,
\end{equation}
where
\begin{equation}
  W(s,t) := \frac{1}{t} - \frac{\partial_t f(s,t)}{f(s,t)}
  = \frac{1}{t \, \big[1+\theta'(s)^2\, t^2\big]}
  \,.
\end{equation}
Note that $W\varphi_n \psi_1'$ belongs to $\Hilbert$.
Indeed, the fact that $t^{-1} \psi_1'(t)$
remains bounded as $t \to 0$ follows from the Neumann
condition that~$\psi_1$ satisfies at zero, \cf~\eqref{Neumann}.

By the variational definition of $\inf\sigma(H)$,
it is enough to show that the right-hand side of~\eqref{yields.bis}
is negative for some~$n$.	
Using the pointwise inequality $f \ge 1$, 
we obtain
\begin{equation}
  \left\|\frac{\varphi_n' \psi_1}{f} \right\|_\Hilbert^2
  \le \|\varphi_n'\|_{\sii(\Real)}^2 \, \|\psi_1\|_{\sii((a_1,a_2))}^2
  = n^{-1} \, \|\psi_1\|_{\sii((a_1,a_2))}^2
  \,,
\end{equation}
so this term goes to zero as $n\to\infty$.
To show that the second term on the right-hand side of~\eqref{yields.bis}
is negative, we use that~$\psi_1$ is decreasing on $(0,a_2)$
and increasing on $(a_1,0)$.
(In fact, because of the decoupling, 
$\psi_1$~is necessarily identically zero in one of these intervals.)
At the same time, $t \mapsto W(s,t)$ is positive on $(0,a_2)$
and negative on $(a_1,0)$.
Consequently, 
\begin{equation}
  c_n := \big(\varphi_n \psi_1,W\varphi_n \psi_1'\big)_\Hilbert
  < 0
  \,.
\end{equation}
Moreover, $n \mapsto c_{n}$ is decreasing
due to the definition of~$\varphi_n$.
Hence, the right-hand side of~\eqref{yields.bis} is negative
for all sufficiently large~$n$ and the desired claim follows. 
\end{proof}

While Theorem~\ref{Thm.disc} is valid for any twisting angle~$\theta$,
it is of particular interest for diverging twisting~\eqref{diverge},
when~$\lambda_1$ is the threshold of the essential spectrum of~$H$ 
due to Theorem~\ref{Thm.ess}.
In this case Theorem~\ref{Thm.disc} implies 
Theorem~\ref{Thm.disc.intro} as a corollary.

%
%
\providecommand{\bysame}{\leavevmode\hbox to3em{\hrulefill}\thinspace}
\providecommand{\MR}{\relax\ifhmode\unskip\space\fi MR }
\providecommand{\MRhref}[2]{%
  \href{http://www.ams.org/mathscinet-getitem?mr=#1}{#2}
}
\providecommand{\href}[2]{#2}

\end{document}